\providecommand{\todo}[2][]{}
\theoremstyle{plain}
\newtheorem{thm}{Theorem}[section]
\newtheorem{prop}[thm]{Proposition}
\newtheorem*{prop*}{Proposition}
\newtheorem{lem}[thm]{Lemma}
\newtheorem*{lem*}{Lemma}
\newtheorem*{cor*}{Corollary}
\theoremstyle{definition}
\newtheorem{defn}[thm]{Definition}
\newtheorem*{defn*}{Definition}
\newtheorem{rem}[thm]{Remark}
\newtheorem*{rem*}{Remark}
\newtheorem{example}[thm]{Example}
\newtheorem*{example*}{Example}
\DeclareMathOperator{\im}{Im}
\DeclareMathOperator{\ind}{ind}
\DeclareMathOperator{\re}{Re}
\DeclareMathOperator{\sign}{sign}
\newcommand{\abs}[1]{\vert #1 \vert}
\newcommand{\abslr}[1]{\left\vert #1 \right\vert}
\newcommand{\R}{\ensuremath{\mathbb{R}}}
\newcommand{\C}{\ensuremath{\mathbb{C}}}
\newcommand{\Z}{\ensuremath{\mathbb{Z}}}
\newcommand{\bigO}{\ensuremath{\mathcal{O}}}
\newcommand{\defby}{\mathrel{\mathop:}=}
\newcommand{\coloneq}{\mathrel{\mathop:}=}
\newcommand{\conj}[1]{\overline{#1}}
\newcommand{\titlestr}{The index of singular zeros of harmonic mappings of
anti-analytic degree one}
\newcommand{\authrob}{Robert Luce}
\newcommand{\autholi}{Olivier S\`{e}te}
\newcommand{\affilrob}{Ecole polytechnique f\'ed\'erale de Lausanne, Switzerland}
\newcommand{\affiloli}{University of Oxford, United Kingdom}
\newcommand{\emailrob}{robert.luce@epfl.ch}
\newcommand{\emailoli}{olivier.sete@maths.ox.ac.uk}
\newcommand{\urlrob}{http://people.epfl.ch/robert.luce}
\newcommand{\urloli}{http://www.maths.ox.ac.uk/people/olivier.sete}
\newcommand{\ackstr}{This research was supported through the programme
``Research in Pairs'' by the Mathematisches Forschungsinstitut
Oberwolfach in 2017.}
\begin{document}

\opt{preprint}{%
    \title{\titlestr\thanks{\ackstr}}
    \author{\authrob\footnote{\affilrob, {\ttfamily\emailrob,
    \urlrob}} \and \autholi\footnote{\affiloli, {\ttfamily\emailoli,
    \urloli}}}
}
\opt{tandf}{%
    \title{\titlestr\thanks{\ackstr}}
    \author{
    \name{\authrob\textsuperscript{a}$^\dagger$\thanks{$^\dagger${\ttfamily\emailrob, \urlrob}}
    and
    \autholi\textsuperscript{b}$^\ddagger$\thanks{$^\ddagger${\ttfamily\emailoli,
    \urloli}}}
    \affil{\textsuperscript{a}\affilrob\\ \textsuperscript{b}\affiloli}
}
}
\maketitle

\begin{abstract}
We study harmonic mappings of the form $f(z) = h(z) - \overline{z}$,
where $h$ is an analytic function.   In particular we are interested
in the index (a generalized multiplicity) of the zeros of such
functions.  Outside the critical set of $f$, where the Jacobian of $f$
is non-vanishing, it is known that this index has similar properties
as the classical multiplicity of zeros of analytic functions.  Little
is known about the index of zeros \emph{on} the critical set, where
the Jacobian vanishes; such zeros are called \emph{singular zeros}.
Our main result is a characterization of the index of singular zeros,
which enables one to determine the index directly from the power series of
$h$.

\end{abstract}

\opt{tandf}{
    \begin{keywords}
}
\opt{preprint}{
\paragraph*{Keywords}
}
Harmonic mappings; Poincar\'e index; singular zero; multiplicity; critical set
\opt{tandf}{
    \end{keywords}
}

\opt{tandf}{
    \begin{classcode}
}
\opt{preprint}{
\paragraph*{AMS Subject Classification (2010)}
}
31A05, 30C55
\opt{tandf}{
    \end{classcode}
}

\opt{mydraft}{
\listoftodos
}

\section{Introduction}

Let $f$ be a harmonic mapping of the complex plane, i.e., $f : D
\subset \C \rightarrow \C$ with $\Delta f = 0$.  Such functions have a
(local) representation $f = h + \conj{g}$, where $h$ and $g$ are
analytic functions.   The functions $f$ and $\conj{g}$ are called
the analytic and anti-analytic parts of $f$, respectively;
see~\cite{Duren2004} for a general introduction.  In this work we
study functions of the type
\begin{equation}
    \label{eq:harm_z}
    f(z) = h(z) - \conj{z},
\end{equation}
that is, where the anti-analytic part simply is $-\conj{z}$.
Functions of this type have been of interest in gravitational
lensing~\cite{MPW1999,KhavinsonNeumann:2006,KhavinsonNeumann:2008,LSL_grg1,SLL_cmft1,LSL_cmft2},
and they also have been studied in the context of Wilmshurst's
conjecture~\cite{Wilmshurst:1998, Geyer:2008, KhavinsonSwiatek2003}.
In all these works the zeros and their indices have a pronounced role.

Zeros of harmonic functions like in~\eqref{eq:harm_z} do not have a
\emph{multiplicity} in the classical sense of polynomials or analytic
functions, but the notion of multiplicity can be generalized to the
change of argument around a zero (or the ``winding'' around it);
see~\cite{Balk1991, Sheil-Small2002}.  We call this ``generalized
multiplicity'' the \emph{index} of the zero.

The \emph{critical set} of~\eqref{eq:harm_z}, i.e., the set where the Jacobian
of $f$ vanishes, divides the complex plane into regions where $f$ is
either sense-preserving, or sense-reversing, depending on the sign of
the Jacobian.  Within these regions the harmonic mapping $f$ is
locally one-to-one, and shares many properties with analytic (or
anti-analytic) functions.  In particular, within these regions, we have
an argument
principle~\cite{DurenHengartnerLaugesen1996,SuffridgeThompson2000}
that allows to count the number of zeros encircled by a curve: indices
of sense-preserving zeros are $+1$, and indices of
sense-reversing zeros are $-1$.  Moreover the index of a zero
$z_0$ can be determined directly from the power series of $h$ at
$z_0$.

In this work we study the index of isolated zeros $z_0$ \emph{on} the
critical set, called \emph{singular zeros}.  Although the index is
defined for these zeros as well (see~\cite{Balk1991,
Sheil-Small2002}), very little is known about it in the existing
literature.  In a first result we show
that the index of $f$ at such a zero can only take values in
$\{-1,0,1\}$, and that every value is attainable, for which we give
examples.
Our main contribution in this work, however, is a
\emph{characterization} of the index
in terms of the power series of $h$ at $z_0$.  The characterization is
almost complete --- except for one curious configuration of the
coefficients of the power series,  which we discuss with great detail
later on.

The organization is as follows.  Section~\ref{sect:background}
contains some background material.  In Section~\ref{sect:bounds} we
derive a bound on the index of singular zeros and present some
examples.  The main Section~\ref{sect:index} contains our
characterization of the index of singular zeros of $f(z) = h(z) - \conj{z}$.
We discuss possible future work in Section~\ref{sect:conclusion}.

\section{Background}
\label{sect:background}

Whether a harmonic function $f = h + \conj{g}$ is sense-preserving or
sense-reversing is determined by the sign of the Jacobian of $f$;
see~\cite{Duren2004}. In our case of interest, where $f(z) = h(z) -
\conj{z}$, a classification can be cast as follows.

\begin{defn} \label{defn:sense-pres-rev}
Let $f(z) = h(z) - \bar{z}$, with an analytic function $h$, and let $z_0 \in 
\C$.  Then
\begin{compactenum}
\item $f$ is called \emph{sense-preserving} at $z_0$ if $\abs{h'(z_0)} > 1$,
\item $f$ is called \emph{sense-reversing} at $z_0$ if $\abs{h'(z_0)} < 1$,
\item $z_0$ is called a \emph{singular point of $f$} if $\abs{h'(z_0)} = 1$.
\end{compactenum}
If additionally $f(z_0) = 0$, the point $z_0$ will be called a
\emph{sense-preserving}, \emph{sense-reversing} or \emph{singular
    zero}, respectively.  If the zero $z_0$ is not singular, we will
    say that $z_0$ is a \emph{regular} zero.
\end{defn}

\subsection{The winding of a function along a curve}

We recall the definition of the winding of a continuous function along
a curve; see~\cite{Balk1991}, \cite[p.~101]{Wegert2012}, or
\cite[p.~29]{Sheil-Small2002}, where the winding is called ``degree''.
Let $\Gamma$ be a curve in the complex plane parametrized by $\gamma :
[a, b] \to \C$, i.e., $\gamma$ is a continuous function.  Throughout
this article we assume that $\Gamma$ is rectifiable.  Let $f : \Gamma
\to \C$ be a continuous function that has no zeros on $\Gamma$, and
denote by $\arg(f \circ \gamma)$ a continuous branch of the argument
of $f \circ \gamma$.  Then the \emph{winding of $f$ on $\Gamma$} is
defined as the change of argument of $f$ along the curve,
\begin{equation*}
V(f; \Gamma) = \frac{1}{2 \pi} [ \arg(f(\gamma(b))) -
    \arg(f(\gamma(a))) ].
\end{equation*}
The winding is independent of the choice of the branch of the
argument, and of the parametrization.  We summarize a few useful
properties of the winding.

\begin{prop}[{see \cite[p.~37]{Balk1991} or~\cite[p.~29]{Sheil-Small2002}}]
\label{prop:winding}
Let $\Gamma$ be a curve, and let $f$ and $g$ be continuous and nonzero 
functions on $\Gamma$.
\begin{enumerate}
\item If $\Gamma$ is a closed curve, then $V(f; \Gamma)$ is an integer.

\item If $\Gamma$ is a closed curve and if there exists a continuous and
single-valued branch of the argument on $f(\Gamma)$, then $V(f; \Gamma) = 0$.


\item We have $V(f g; \Gamma) = V(f; \Gamma) + V(g; \Gamma)$.

\item If $f(z) = c \neq 0$ is constant on $\Gamma$, then $V(f; \Gamma) = 0$.

\end{enumerate}
\end{prop}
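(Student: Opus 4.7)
The plan is to prove the four claims essentially from the definition of a continuous branch of the argument, exploiting that two real numbers are arguments of the same nonzero complex number if and only if they differ by an integer multiple of $2\pi$.

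For (1), I would fix a continuous branch $\varphi \colon [a,b] \to \R$ of $\arg(f\circ \gamma)$. Since $\Gamma$ is closed, $\gamma(a) = \gamma(b)$, so $f(\gamma(a)) = f(\gamma(b))$, and hence $\varphi(a)$ and $\varphi(b)$ are both arguments of the same nonzero complex number. Therefore $\varphi(b) - \varphi(a) \in 2\pi\Z$, which gives $V(f;\Gamma) \in \Z$.

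For (2), if $A \colon f(\Gamma) \to \R$ is a continuous and single-valued branch of $\arg$ on $f(\Gamma)$, then $\varphi \coloneq A \circ f \circ \gamma$ is a continuous branch of $\arg(f \circ \gamma)$. For a closed curve $\gamma(a)=\gamma(b)$, so $\varphi(a) = \varphi(b)$ and thus $V(f;\Gamma) = 0$. For (3), I would pick continuous branches $\alpha$ of $\arg(f\circ\gamma)$ and $\beta$ of $\arg(g\circ\gamma)$. The sum $\alpha+\beta$ is continuous, and since $e^{i(\alpha(t)+\beta(t))} = e^{i\alpha(t)}e^{i\beta(t)}$ equals $(f\cdot g)(\gamma(t))/|(f\cdot g)(\gamma(t))|$, it is a continuous branch of $\arg(fg\circ \gamma)$. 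Evaluating at the endpoints and dividing by $2\pi$ yields the additivity. Claim (4) is immediate: a constant function $c\neq 0$ admits the constant argument $\arg(c)$ as a continuous branch, so the change of argument along $\Gamma$ vanishes.

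None of the steps presents a genuine obstacle; the only point requiring minor care is in (3), where one must justify that the sum of continuous branches is again a continuous branch of the argument of the product. This follows from continuity of addition together with the identity $e^{i(\alpha+\beta)} = e^{i\alpha}e^{i\beta}$, which shows that $\alpha + \beta$ is a real-valued continuous function whose exponential agrees with $(fg\circ\gamma)/|fg\circ\gamma|$. The independence of the winding from the chosen branch (used throughout) has already been remarked upon right before the proposition and does not need to be reproved.
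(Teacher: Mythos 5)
Your proof is correct, and since the paper states this proposition without proof (citing Balk and Sheil-Small), your elementary argument from the definition --- branches of the argument of the same nonzero value differ by multiples of $2\pi$, composition with a single-valued branch, additivity of branches for products, constant branch for constants --- is exactly the standard argument behind the cited references. Nothing further is needed; the appeal to branch-independence of the winding, noted before the proposition in the paper, is legitimately used as stated.
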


\begin{example} \label{ex:multiplicity}
Let $f(z) = (z-z_0)^n$ with $n \in \Z$ and consider the circle $\Gamma
    = \{z \in \C : \abs{z-z_0} = r > 0\}$
parametrized by $\gamma(t) = z_0 + r e^{it}$, $0 \leq t \leq 2 \pi$.
Then $\arg(f(\gamma(t))) = n t$ is a continuous branch of the argument of 
$f \circ \gamma$, which shows that $V(f; \Gamma) = \frac{1}{2 \pi} (2 \pi 
n - 0) = n$.

Now consider $f(z) = (z-z_0)^n g(z)$ where $n \in \Z$ and where $g$ is
analytic and nonzero in a 
disk $D = \{ z : \abs{z-z_0} < R \}$. For $0 < r < R$, 
the closed curve $g \circ \gamma$ does not contain the origin in its interior, 
so 
that $V(g; \Gamma) = 0$.  With Proposition~\ref{prop:winding} we find $V(f; 
\Gamma) = V((z-z_0)^n; \Gamma) + V(g; \Gamma) = n$.  For a zero of $f$ ($n > 
0$) the winding is the multiplicity of the zero.  For a pole of $f$ ($n < 0$), 
the winding is minus the order of the pole.
\end{example}

\begin{example} \label{ex:winding_conjz}
Let $f(z) = \conj{z}$ and $\gamma(t) = r e^{it}$, $0 \leq t \leq 2 \pi$, with 
$r > 0$.  Then $-t$ is a continuous branch of the argument of $f \circ \gamma$, 
showing $V(\conj{z}; \Gamma) = -1$.
\end{example}

We will often show that two functions $f$ and $g$ have the same
winding along a closed curve, and our two main tools for this are
homotopy and Rouch\'e's theorem.  Let $\Gamma$ be a closed curve with
parametrization $\gamma$, then $V(f; \Gamma)$ is the winding number of
the closed curve $f \circ \gamma$. If $f \circ \gamma$ and $g \circ
\gamma$ are homotopic in $\C \setminus \{0\}$, then $V(f; \Gamma)$ =
$V(f; \Gamma)$; see~\cite[p.~88]{Conway1978} 
or~\cite[Lemma~2.7.22]{Wegert2012}.  The symmetric
formulation of Rouch\'es theorem we use is as follows;
see~\cite[Theorem~2.3]{SLL_cmft1}.

\begin{thm}[Rouch\'e's theorem] \label{thm:Rouche}
Let $\Gamma$ be a closed curve, and let $f$ and $g$ be two continuous functions 
on $\Gamma$.  If
\begin{equation*}
\abs{ f(z) + g(z) } < \abs{ f(z) } + \abs{ g(z) }, \quad z \in \Gamma,
\end{equation*}
then $f$ and $g$ have the same winding on $\Gamma$, i.e., $V(f; \Gamma) = V(g; 
\Gamma)$.
\end{thm}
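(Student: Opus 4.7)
The plan is to reduce the claim to the homotopy invariance of the winding, which the authors recall in the paragraph just before the statement. First, I would rewrite the hypothesis in a more usable form. Squaring $\abs{f(z)+g(z)} < \abs{f(z)}+\abs{g(z)}$ gives $\re(f(z)\conj{g(z)}) < \abs{f(z)}\abs{g(z)}$, and equality in this Cauchy--Schwarz step would force $f(z)$ and $g(z)$ to be non-negative real multiples of each other. The strict inequality therefore implies that both $f(z)\neq 0$ and $g(z)\neq 0$ on $\Gamma$, and that the quotient $f(z)/g(z)$ never lies in $[0,\infty)$.

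Second, I would set up the straight-line homotopy $H_t(z) \coloneq (1-t)f(z) - tg(z)$ for $t \in [0,1]$ and verify that $H_t(z) \neq 0$ for every $z \in \Gamma$ and every $t \in [0,1]$. For $t \in (0,1)$, a zero of $H_t$ at some $z \in \Gamma$ would give $f(z)/g(z) = t/(1-t) \in (0,\infty)$, contradicting the first step; and at $t=0$ and $t=1$ the functions $f$ and $-g$ are already nonzero on $\Gamma$. Composing with the parametrization of $\Gamma$, the map $H_t \circ \gamma$ is then a homotopy in $\C \setminus \{0\}$ from $f\circ\gamma$ to $(-g)\circ\gamma$, and the homotopy invariance of winding recalled above yields $V(f;\Gamma) = V(-g;\Gamma)$.

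Finally, the multiplicativity of the winding together with the vanishing of the winding of a nonzero constant (parts 3 and 4 of Proposition~\ref{prop:winding}) give $V(-g;\Gamma) = V(-1;\Gamma) + V(g;\Gamma) = V(g;\Gamma)$, which completes the argument.

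I do not foresee a real obstacle: the only mildly subtle point is translating the symmetric triangle inequality into the clean statement $f/g \notin [0,\infty)$ on $\Gamma$, which is what makes the choice of sign in the homotopy $H_t$ work. Once that translation is made, the rest is the standard deformation argument underlying every form of Rouch\'e's theorem.
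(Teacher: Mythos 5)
Your argument is correct and complete. Note that the paper does not prove this statement at all --- it is quoted from the reference given in the text ([SLL\_cmft1, Theorem~2.3]) --- so there is no internal proof to match; what you give is a legitimate self-contained proof using exactly the tools the paper recalls. Your key reduction is the right one: squaring the hypothesis gives $\re\bigl(f(z)\conj{g(z)}\bigr) < \abs{f(z)}\,\abs{g(z)}$ on $\Gamma$, which forces $f$ and $g$ to be zero-free there and the quotient $f/g$ to avoid the ray $[0,\infty)$; your sign choice in the segment homotopy $H_t = (1-t)f - tg$ is precisely what makes the excluded ray $(0,\infty)$ appear, and the endpoints $f$ and $-g$ are handled by $V(-g;\Gamma)=V(-1;\Gamma)+V(g;\Gamma)=V(g;\Gamma)$ via Proposition~\ref{prop:winding}(3),(4). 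For comparison, the more common (and slightly shorter) route from the same observation skips the homotopy altogether: since $f/g$ is continuous, nonzero, and omits $[0,\infty)$ on $\Gamma$, its image lies in a slit plane admitting a single-valued continuous branch of the argument, so $V(f/g;\Gamma)=0$ by Proposition~\ref{prop:winding}(2), and then $V(f;\Gamma)=V(f/g;\Gamma)+V(g;\Gamma)=V(g;\Gamma)$ by multiplicativity. The two proofs are essentially equivalent in content --- both rest on the triangle-inequality equality case --- with yours trading the ``slit-plane'' step for the homotopy invariance of the winding that the paper states just before the theorem.
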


\subsection{The index of a function at a point}

The argument principle connects the global change of argument along a
curve to the local change of argument around a single point.  The
latter is called the Poincar\'e index, or multiplicity of $f$, or
simply ``index'' at the point.

\begin{defn} \label{defn:Poincare_index}
Let $f$ be continuous and nonzero in the punctured disk $\{ z \in \C : 0 < 
\abs{z-z_0} < R \}$.  Let $0 < r < R$ and let $\Gamma_r$ be the positively 
oriented circle with center $z_0$ and radius $r$.
Then the \emph{Poincar\'e index} of $f$ at $z_0$ 
is defined as
\begin{equation*}
\ind(f; z_0) \coloneq V(f; \Gamma_r) \in \Z.
\end{equation*}
The point $z_0$ is called an isolated \emph{exceptional point} of $f$
if it is a zero of $f$, or if $f$ is not continuous at $z_0$, or
if $f$ is not defined at $z_0$.
\end{defn}

The Poincar\'e index is independent of the choice of $r$, and the
circle can even be replaced by an arbitrary positively oriented Jordan
curve that winds around $z_0$; see~\cite[p.~39]{Balk1991}
or~\cite[Section~2.5.1]{Sheil-Small2002}.  The Poincar\'e index
is a generalization of the multiplicity of a zero and order of a
pole of an analytic function; see Example~\ref{ex:multiplicity}.

The only isolated exceptional points of the function $f(z) = h(z) - \conj{z}$, 
where $h$ is analytic, are the zeros of $f$ and the isolated singularities 
of $h$ (poles, removable singularities and essential singularities).

We briefly discuss the connection of the Poincar\'{e} index with phase 
portraits, which are a convenient way to visualize complex 
functions~\cite{Wegert2012,WegertSemmler2011}.  Roughly speaking, each point on 
the unit circle is associated with a color, and the domain of $f$ is colored 
according to the value its phase $f(z)/\abs{f(z)} = \exp(i \arg(f(z)))$ 
takes on the unit circle.
Let $f$ be a continuous complex function.
The Poincar\'e index of an isolated exceptional point $z_0$ of $f$ is the 
change of argument of $f(z)$ while $z$ travels once around $z_0$ on a small 
circle in the positive sense.
This corresponds exactly to the \emph{chromatic number} of $\gamma$, as
discussed in~\cite[p.~772]{WegertSemmler2011}.  Thus, less formally,
the Poincar\'{e} index corresponds to the number of times we run through the 
color wheel while travelling once around $z_0$ in the positive direction, and 
the sign of the Poincar\'{e} index is revealed by the ordering in which the
colors appear.  This observation allows to determine the Poincar\'{e}
index of $f$ at an isolated exceptional point from a phase portrait.

We use the same color scheme for the phase plots as
in~\cite{Wegert2012}.  The color ordering while travelling around some
point $z_0$ is exemplified for the indices $+1$, $+2$, $-1$ and $0$ as
follows (left to right, $z_0$ is indicated by the black dot):
\begin{center}
\hspace*{25pt}
\includegraphics[width=55pt]{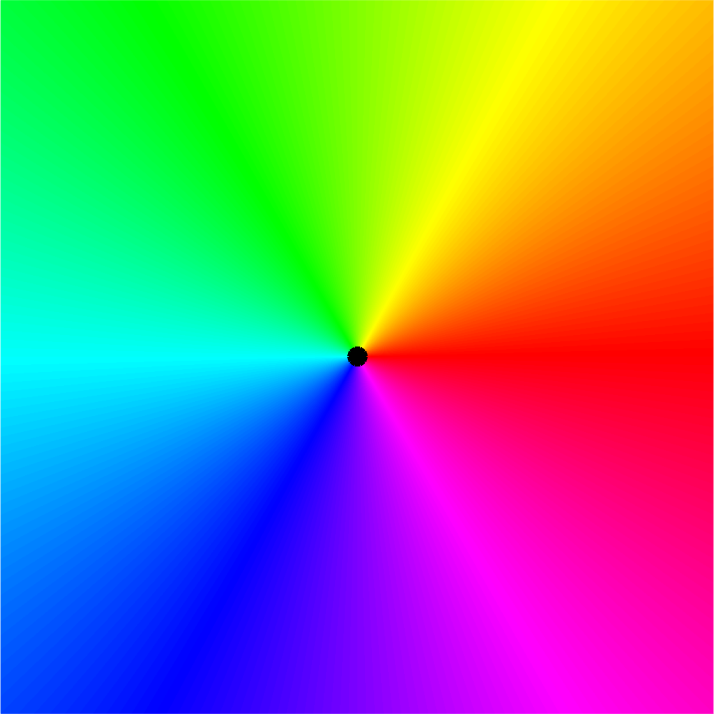}
\hfill
\includegraphics[width=55pt]{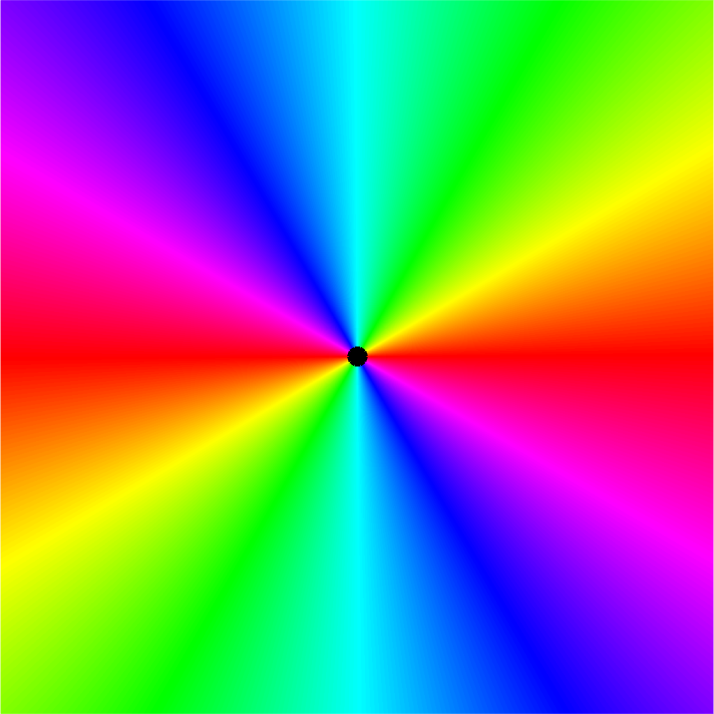}
\hfill
\includegraphics[width=55pt]{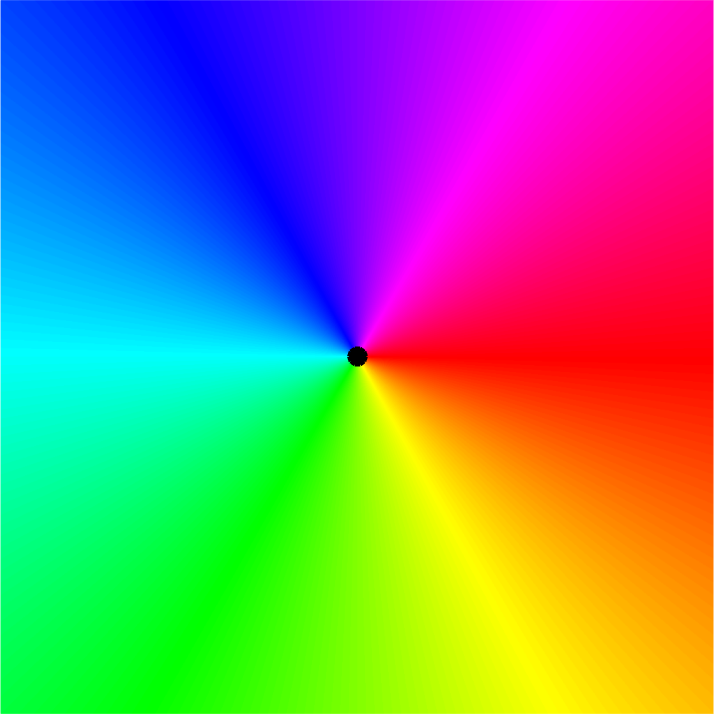}
\hfill
\includegraphics[width=55pt]{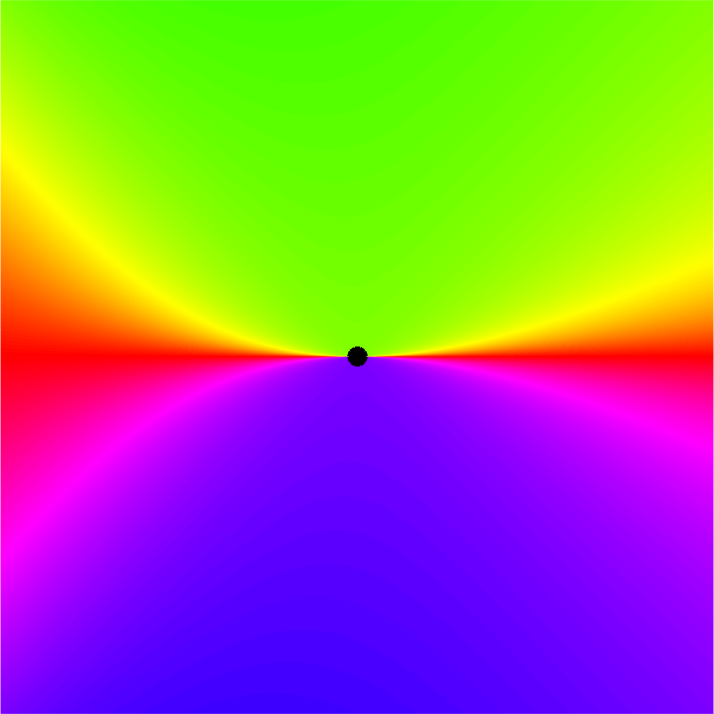}
\hspace*{25pt}
\end{center}
The phase plots in this paper have been generated with a
\textsc{Matlab}\textsuperscript{\textregistered} implementation 
close to~\cite[p.~345]{Wegert2012}.

When $h$ is rational, the index of $f(z) = h(z) - \conj{z}$ at sense-preserving 
and sense-reversing exceptional points is 
known (\cite[Poprosition~2.7]{SLL_cmft1}; see 
Proposition~\ref{prop:regular} below for the generalization to analytic $h$).
There are no such previous results for singular zeros.

\begin{prop} \label{prop:index_rational}
Let $f(z) = h(z) - \conj{z}$ where $h$ is a rational function of degree at 
least $2$.
\begin{enumerate}
\item If $z_0$ is a pole of $h$ of order $m$, then $\ind(f; z_0) = -m$.
\item If $z_0$ is a sense-preserving zero of $f$, then $\ind(f; z_0) = +1$.
\item If $z_0$ is a sense-reversing zero of $f$, then $\ind(f; z_0) = -1$.
\end{enumerate}
\end{prop}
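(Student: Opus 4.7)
The plan is to treat each of the three items separately by replacing $f$ on a small positively oriented circle $\Gamma_r$ around $z_0$ with a simpler function of the same winding. The replacement is achieved either via Rouch\'e's theorem (Theorem~\ref{thm:Rouche}) or by a straight-line homotopy, and in each case the winding of the simplified function is computed from the examples of Section~\ref{sect:background}.

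For item~(1), if $z_0$ is a pole of $h$ of order $m$, then $\abs{h(z)} \to \infty$ as $z \to z_0$ while $\abs{\conj{z}}$ remains bounded. Hence, for $r$ small enough, the family $H_s(z) \coloneq h(z) - s\,\conj{z}$, $s \in [0,1]$, avoids $0$ on $\Gamma_r$, so that $V(f;\Gamma_r) = V(H_1;\Gamma_r) = V(H_0;\Gamma_r) = V(h;\Gamma_r)$. Writing $h(z) = (z-z_0)^{-m}\,g(z)$ with $g$ analytic and nonvanishing at $z_0$, Example~\ref{ex:multiplicity} yields $V(h;\Gamma_r) = -m$.

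For items~(2) and~(3), the condition $f(z_0)=0$ forces $h(z_0) = \conj{z_0}$, so the Taylor expansion of $h$ at $z_0$ gives
\[
    f(z) = h'(z_0)\,(z-z_0) - \conj{(z-z_0)} + R(z), \qquad \abs{R(z)} = \bigO(\abs{z-z_0}^2).
\]
In case~(2), $\abs{h'(z_0)} > 1$, so on $\Gamma_r$ with $r$ small the linear term $h'(z_0)\,(z-z_0)$ dominates the remainder $-\conj{(z-z_0)} + R(z)$; a Rouch\'e (or homotopy) argument then gives $\ind(f;z_0) = V(h'(z_0)\,(z-z_0);\Gamma_r) = +1$ by Example~\ref{ex:multiplicity} and Proposition~\ref{prop:winding}. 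In case~(3), $\abs{h'(z_0)} < 1$, so the anti-linear term $-\conj{(z-z_0)}$ dominates $h'(z_0)\,(z-z_0) + R(z)$ for small $r$, and the same argument combined with Example~\ref{ex:winding_conjz} and Proposition~\ref{prop:winding} yields $\ind(f;z_0) = V(-\conj{(z-z_0)};\Gamma_r) = -1$.

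The only non-trivial step is verifying the dominance hypothesis in each case, which is ultimately routine: it is immediate from $\abs{h(z)} \to \infty$ in the pole case, and reduces in the two zero cases to the strict gap $\abs{h'(z_0)} \neq 1$ together with the $\bigO(r^2)$ decay of $R$, so that the dominant term strictly exceeds the remainder on $\Gamma_r$ for all $r$ small enough.
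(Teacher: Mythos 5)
Your proposal is correct, but it does not follow the paper's route: the paper does not prove Proposition~\ref{prop:index_rational} at all — it imports it from the prior work cited there — and when it later establishes the regular-zero statements for general analytic $h$ (Theorem~\ref{thm:index_bound}, parts 1 and 2), it argues via the nonvanishing Jacobian: $f$ is locally one-to-one, so a small circle is mapped to a Jordan curve winding once around the origin, with sign given by the orientation. Your argument instead compares $f$ on $\Gamma_r$ with the dominant term of its local expansion — $h(z)$ itself at a pole, $h'(z_0)(z-z_0)$ at a sense-preserving zero, $-\conj{(z-z_0)}$ at a sense-reversing zero — and transfers the winding by Rouch\'e/homotopy, then evaluates it from Example~\ref{ex:multiplicity}, Example~\ref{ex:winding_conjz} and Proposition~\ref{prop:winding}. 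The dominance estimates are exactly as you say (the strict gap $\abs{h'(z_0)}\neq 1$ beats the $\bigO(r^2)$ remainder, and $\abs{h(z)}\to\infty$ beats $\abs{\conj z}$), and as a byproduct they show the exceptional point is isolated, so the index is well defined; when invoking the paper's symmetric Rouch\'e (Theorem~\ref{thm:Rouche}) one only needs the harmless bookkeeping $V(-\ell;\Gamma)=V(\ell;\Gamma)$. What your approach buys: it is self-contained and elementary, it handles the pole case by the same mechanism rather than by citation, and it nowhere uses that $h$ is rational of degree at least $2$, so it in fact proves the more general Proposition~\ref{prop:regular}/Theorem~\ref{thm:index_bound} statements for analytic $h$. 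What the paper's injectivity argument buys is brevity for the zero cases, at the price of invoking the fact that a locally injective sense-preserving (resp. sense-reversing) map sends a small circle to a Jordan curve traversed positively (resp. negatively).
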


The argument principle connects the global change of argument along a curve 
to the local change of argument around exceptional points.
Here we state a version for merely continuous complex functions.

\begin{thm}[{\cite[p.~39]{Balk1991}, \cite[p.~44]{Sheil-Small2002}}] 
\label{thm:argument_principle}
Let the function $f$ be continuous in the closed region $\overline{D}$
limited by the closed Jordan curve $\Gamma$ and suppose that $f$ has only a 
finite number of exceptional points $z_1, z_2, \ldots, z_n$ in $\overline{D}$ 
neither of which is on $\Gamma$.  Then
\begin{equation*}
V(f; \Gamma)
= \ind(f; z_1) + \ind(f; z_2) + \ldots + \ind(f; z_n).
\end{equation*}
\end{thm}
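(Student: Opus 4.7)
The plan is to reduce the identity to the fact that the winding of $f$ along a closed curve bounding a simply connected region on which $f$ is continuous and nonzero vanishes, via the classical ``excision and cut'' construction. First, choose radii $r_k > 0$ so small that the closed disks $\overline{B_k} \coloneq \{z : \abs{z-z_k} \le r_k\}$ are pairwise disjoint and contained in the interior of $\Gamma$, and let $\Gamma_k$ denote the positively oriented boundary circle of $B_k$. On the compact set $\Omega \coloneq \overline{D} \setminus \bigcup_{k=1}^{n} B_k$ the function $f$ is continuous and, by the assumption on the $z_k$, nowhere zero.

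Next, adjoin pairwise disjoint rectifiable simple arcs $\alpha_1, \ldots, \alpha_n$ in $\Omega$, where $\alpha_k$ joins a point on $\Gamma_k$ to a point on $\Gamma$, chosen so that $\Omega \setminus \bigcup_k \alpha_k$ splits into finitely many simply connected Jordan subregions $R_1, \ldots, R_m$. Since $f$ is continuous and nonzero on the simply connected compact set $\overline{R_j}$, it lifts through the covering map $\exp : \C \to \C \setminus \{0\}$, yielding a continuous branch $\phi_j$ of $\arg(f)$ on $\overline{R_j}$. For any parametrization $\gamma$ of the closed curve $\partial R_j$, the composition $\phi_j \circ \gamma$ is then a continuous branch of $\arg(f \circ \gamma)$, and the definition of the winding gives $V(f; \partial R_j) = 0$.

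Summing these identities, each cutting arc $\alpha_k$ is traversed twice with opposite orientations and cancels out; the outer curve $\Gamma$ appears once in the positive sense, while each $\Gamma_k$ appears once with reversed orientation, since it bounds $\Omega$ from the side exterior to the hole $B_k$. Additivity of $V$ under concatenation of paths thus produces $V(f; \Gamma) - \sum_{k=1}^{n} V(f; \Gamma_k) = 0$, and Definition~\ref{defn:Poincare_index} identifies $V(f; \Gamma_k) = \ind(f; z_k)$, giving the claimed identity. The principal difficulty is purely topological --- arranging the arcs $\alpha_k$ so that the resulting pieces are genuine simply connected Jordan subregions with coherently orientable boundaries --- and rests on the Jordan curve theorem together with the disjointness of the $B_k$; the analytic content of the argument is concentrated in the single lifting step that exploits simple connectedness.
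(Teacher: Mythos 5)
The paper does not prove this theorem at all --- it is quoted as a known result with citations to Balk and to Sheil-Small --- so there is no internal proof to compare against. Your argument is the standard excision-and-cut proof that one finds in those sources, and it is essentially sound: the disks $B_k$ can be chosen disjoint and interior to $\Gamma$ because the $z_k$ are finitely many and none lies on $\Gamma$; on $\Omega$ the function is continuous and zero-free (all exceptional points, in particular all zeros, have been excised); the lifting step is legitimate since the closure of a Jordan subregion is compact, simply connected and locally path-connected, so $f$ lifts through $\exp\colon\C\to\C\setminus\{0\}$ and a single-valued continuous $\arg f$ exists on $\overline{R_j}$, forcing $V(f;\partial R_j)=0$ (this is exactly item~2 of Proposition~\ref{prop:winding}); and the bookkeeping $V(f;\Gamma)-\sum_k V(f;\Gamma_k)=0$ together with Definition~\ref{defn:Poincare_index} finishes the proof, the radii $r_k$ being small enough that each $\Gamma_k$ is admissible for computing $\ind(f;z_k)$. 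The only real gap is the one you flag yourself: the existence of pairwise disjoint rectifiable cross-cuts $\alpha_k$ decomposing $\Omega$ into Jordan subregions whose boundaries are coherently oriented, so that each $\Gamma_k$ appears exactly once with reversed orientation and each cut cancels, is asserted rather than proved. That construction (a consequence of the Jordan curve theorem and standard plane topology) is genuinely fiddly to write out in full, and the cited references also treat it at roughly this level of detail, so your proposal is acceptable as a proof sketch of the classical result, but the topological decomposition is the step that would need expansion for a self-contained account.
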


When $f$ is analytic, the argument principle allows us to count the number of 
zeros of $f$ interior to $\Gamma$.  For harmonic functions the winding along 
the boundary also is the sum of the indices.  The difference is, however, 
that the index of $f$ at a zero may be positive, negative or even zero.
See also the discussion in~\cite[p.~46]{Sheil-Small2002}.

We now collect a few useful facts which we will use in conjunction with the
argument principle and Proposition~\ref{prop:index_rational}
to compute the index of a singular zero.  For a
rational function $r = p/q$ we will say that it
is of the type $(\deg(p), \deg(q))$.

\begin{prop} \label{prop:global_winding}
Let $f(z) = h(z) - \conj{z}$, where $h$ is a rational function of
degree at least two.  Then there exists a $R > 0$ such that all
zeros of $f$ and all poles of $h$ are in the interior of the
circle $\Gamma = \{z \in \C : \abs{z} = R\}$.
\begin{enumerate}
\item If $h$ is of type $(j, n)$ with $j \leq n$, we have $V(f; \Gamma) = -1$.
\item If $h$ is of type $(k+n, k)$ with $n \geq 2$, we have $V(f; \Gamma) = 
n$.
\item If $h$ is a polynomial of degree $n \geq 2$, then 
$V(f; \Gamma) = n$.
\end{enumerate}
\end{prop}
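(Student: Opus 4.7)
The plan is to reduce each case to computing the winding of a single dominant term on a large circle, via Rouch\'e's theorem (Theorem~\ref{thm:Rouche}), and then to evaluate that winding using Example~\ref{ex:winding_conjz} or the classical (meromorphic) argument principle.

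First I would establish the existence of $R$. Since $h$ is rational, its poles are finite in number and hence all contained in some disk. The zeros of $f$ are bounded as well: in case (1) the function $h$ is bounded at infinity, so $|h(z)|\le M$ for $|z|$ large while $|\bar z|=|z|\to\infty$, forcing $f\neq 0$ there; in cases (2) and (3) we have $|h(z)|\sim c|z|^n$ with $n\ge 2$, which again dominates $|\bar z|$ for $|z|$ large. Taking $R$ larger than all the bounds obtained above guarantees that $\Gamma$ encloses every pole of $h$ and every zero of $f$.

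Next, I would use the following convenient reformulation of Theorem~\ref{thm:Rouche}: replacing $g$ by $-g$ in the stated inequality shows that $|F(z)-G(z)|<|F(z)|+|G(z)|$ on $\Gamma$ implies $V(F;\Gamma)=V(G;\Gamma)$. For case (1), I would set $F=f=h-\bar z$ and $G=-\bar z$, so that $|F-G|=|h(z)|\le M$ while $|F|+|G|\ge(R-M)+R=2R-M$ on $\Gamma$; enlarging $R$ if necessary makes the inequality strict. Hence $V(f;\Gamma)=V(-\bar z;\Gamma)$, and this equals $-1$ by Example~\ref{ex:winding_conjz} combined with Proposition~\ref{prop:winding} (multiplying by the constant $-1$ does not change the winding).

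For cases (2) and (3) the roles of $h$ and $\bar z$ are swapped: here $h$ is the dominant term. Applying the same Rouch\'e estimate with $F=f$ and $G=h$, the relevant inequality $|\bar z|=R<|f|+|h|$ holds for large $R$ because $|h|$ grows like $R^n$ with $n\ge 2$. This gives $V(f;\Gamma)=V(h;\Gamma)$. It remains to compute $V(h;\Gamma)$: in case (3), $h$ is a polynomial of degree $n$, and once $\Gamma$ encloses all its zeros the classical argument principle (or a homotopy to $c_n z^n$ via Example~\ref{ex:multiplicity}) yields $V(h;\Gamma)=n$; in case (2), $h$ is meromorphic of type $(k+n,k)$ and the meromorphic argument principle gives $V(h;\Gamma)=(k+n)-k=n$.

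There is no conceptual obstacle; the only care required is bookkeeping, namely choosing $R$ large enough that (a) every zero of $f$ and every pole of $h$ lies in $\{|z|<R\}$ and (b) the Rouch\'e strict inequality holds uniformly on $\Gamma$. Once $R$ is chosen properly, the three conclusions follow directly.
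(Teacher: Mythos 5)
Your proof is correct, and it is a mild but genuine variant of the paper's argument. The paper handles each case by factoring out the dominant term explicitly: in case (1) it writes $f(z) = -\conj{z}\,(1 - h(z)/\conj{z})$ and observes that the second factor stays in a small disk around $1$ avoiding the origin, so its winding vanishes by Proposition~\ref{prop:winding}, giving $V(f;\Gamma) = V(\conj{z};\Gamma) = -1$; cases (2)--(3) are done the same way by factoring out $z^n$, so the winding of $h$ itself is never needed. You instead compare $f$ with the dominant term via the symmetric Rouch\'e theorem (Theorem~\ref{thm:Rouche}, correctly rewritten in the $\abs{F-G}$ form, which costs the harmless extra step $V(-G;\Gamma)=V(G;\Gamma)$ that you do note), and then in cases (2)--(3) you must additionally evaluate $V(h;\Gamma)$ by the classical argument principle for the rational function $h$, i.e.\ zeros minus poles $=(k+n)-k=n$, which requires enlarging $R$ so that $\Gamma$ also encloses all zeros of $h$ --- legitimate, since nothing changes when $R$ grows. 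A further small difference is the existence of $R$: the paper cites the Khavinson--Neumann bound on the number of zeros, while your growth estimate ($h$ bounded at infinity in case (1), $\abs{h(z)}\sim c\abs{z}^n$ with $n\ge 2$ otherwise) shows directly that the zeros of $f$ lie in a bounded set, which is all that is needed and is arguably more self-contained. Both routes are sound; the paper's factorization avoids any zero/pole counting for $h$, whereas yours trades that for a standard Rouch\'e estimate.
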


\begin{proof}
The function $f(z) = h(z) - \conj{z}$ with rational $h$ of degree $n \geq 2$ 
has at most $5(n-1)$ zeros~\cite{KhavinsonNeumann:2006}, so that $R$ is well 
defined.  Since there are no exceptional points outside $R$, we can enlarge $R$ 
as needed without changing the winding.

To prove the first assertion, consider
\begin{equation*}
f(z) = h(z) - \conj{z} = -\conj{z} ( 1 - h(z) / \conj{z}).
\end{equation*}
Since $\abs{h(z) / \conj{z}} \to 0$ as $\abs{z} \to \infty$, the function $1 - 
h(z)/\conj{z}$ takes on values in a disk around $1$ that does not contain the 
origin, provided $R$ is sufficiently large.  We then find
\begin{equation*}
V(f; \Gamma) = V(\conj{z}; \Gamma) + V(1 - h(z)/\conj{z}; \Gamma) = 
V(\conj{z}; \Gamma) = -1,
\end{equation*}
where we used Proposition~\ref{prop:winding} and 
Example~\ref{ex:winding_conjz}.
Part two is proved similarly by factoring out $z^n$, which is the largest term 
as $\abs{z} \to \infty$.  Part three is a special case of part two.
\end{proof}

Finally, recall Landau's $\bigO$-notation.  We write $g(z) \in \bigO(z^n)$ when 
$g(z) / z^n$ is bounded for $z \to 0$.  In this article the function $g$ will 
always be analytic in a neighbourhood of the origin.
We write $f(z) + \bigO(z^n)$ for an expression $f(z) + g(z)$ with $g(z) \in 
\bigO(z^n)$.

\section{Index bounds}
\label{sect:bounds}

We derive a bound for the index of an isolated \emph{singular} zero.
A similar bound has been obtained for harmonic polynomials
in~\cite[p.~66]{Sheil-Small2002}, and a lower bound for polyanalytic
functions is given in~\cite[Corollary~2.9]{Balk1991}.  For
completeness, we also give the corresponding result for regular zeros.

\begin{thm} \label{thm:index_bound}
Let $z_0$ be an isolated zero of $f(z) = h(z) - \conj{z}$, where $h$
    is an analytic function.
    \begin{enumerate}
        \item If $z_0$ is sense-reversing, then $\ind(f; z_0) = -1$.
        \item If $z_0$ is sense-preserving, then $\ind(f; z_0) = +1$.
        \item If $z_0$ is singular, then $\ind(f; z_0) \in \{-1,0,+1\}$.
    \end{enumerate}
\end{thm}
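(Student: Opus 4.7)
The plan is to reduce, by translation, to $z_0 = 0$ (which preserves the index), so that Taylor expanding $h$ gives
\[
f(w) = a w - \conj{w} + \phi(w), \qquad a \coloneq h'(0), \qquad \phi(w) = \bigO(w^2),
\]
with $\phi$ analytic. I will then handle the three cases separately on a small circle $\Gamma_r = \{|w| = r\}$ around the zero.

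For parts~(1) and~(2) I would apply Rouch\'{e}'s theorem (Theorem~\ref{thm:Rouche}) to $f$ against its dominant term. If $|a|>1$, take $F(w) = a w$: then $|f-F| = |{-\conj{w}+\phi(w)}| \leq r + C r^2$ while $|F| = |a|r$, so $|f-F| < |F|$ for $r$ small, and $|f|>0$ on $\Gamma_r$ because $0$ is an isolated zero. The strict inequality $|f-F|<|f|+|F|$ of Theorem~\ref{thm:Rouche} then holds (applied to $f$ and $-F$), giving $V(f;\Gamma_r) = V(aw;\Gamma_r) = +1$. The case $|a|<1$ is symmetric with $F(w) = -\conj{w}$, giving $V(f;\Gamma_r) = -1$ via Example~\ref{ex:winding_conjz}.

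For part~(3), $|a|=1$, and an additional rotation $w \mapsto e^{-i\alpha/2} w$ of the domain (with $a = e^{i\alpha}$), followed by multiplication by $e^{-i\alpha/2}$, puts $f$ into the form
\[
f(w) = 2i\,\im(w) + \phi(w)
\]
with $\phi$ still analytic and $\phi(w) = \bigO(w^2)$, while preserving the index. Here $\phi \not\equiv 0$, since otherwise $f$ vanishes on the whole real axis, contradicting the isolated zero at $0$. The core of the argument is to analyze
\[
\im f(r e^{i\theta}) = 2 r \sin\theta + \im\phi(re^{i\theta})
\]
on $\Gamma_r$. Since $\phi(w) = \bigO(w^2)$ and $\phi'(w) = \bigO(w)$, both $|\im \phi|$ and $|\partial_\theta \im\phi|$ are $\bigO(r^2)$ on $\Gamma_r$, whereas the leading term contributes $\partial_\theta(2r\sin\theta) = 2r\cos\theta$. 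For $r$ small enough, this lets one show that $\im f$ has exactly two simple zeros $\theta_1$ (near $0$) and $\theta_2$ (near $\pi$) on $\Gamma_r$, with $\partial_\theta \im f(\theta_1) > 0$ and $\partial_\theta \im f(\theta_2) < 0$; outside small neighborhoods of $0$ and $\pi$, the leading term dominates and $|\im f|$ stays bounded below. Isolatedness of the zero moreover forces $\re f(\theta_j) \neq 0$.

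Finally, $\ind(f;0) = V(f;\Gamma_r)$ equals the signed count of crossings of the positive real ray by the curve $f(\Gamma_r)$: a crossing at $\theta_j$ contributes $\sign(\partial_\theta \im f(\theta_j))$ if $\re f(\theta_j) > 0$ and $0$ otherwise. So $\theta_1$ contributes $+1$ or $0$, $\theta_2$ contributes $-1$ or $0$, and the total lies in $\{-1, 0, +1\}$. The main obstacle is making the ``exactly two simple zeros with prescribed monotonicity'' claim precise: the leading part $2r\sin\theta$ has derivative only of order $r$ at its own zeros, while the perturbation is of order $r^2$, so one must choose $r$ small in terms of explicit bounds on the Taylor coefficients of $\phi$ to rule out extra zeros of $\im f$ and preserve monotonicity on the two critical arcs.
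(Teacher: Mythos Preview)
Your proof is correct. Parts~(1) and~(2) via Rouch\'e against the dominant linear term are a standard alternative to the paper's argument, which instead invokes the nonvanishing Jacobian and local injectivity of $f$ at a regular zero; both are routine.

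Part~(3) is where the two proofs genuinely diverge. The paper multiplies through by $z$ to write
\[
f(z) = \tfrac{1}{z}\bigl(zh(z) - \abs{z}^2\bigr),
\]
so that on $\abs{z}=r$ one has $\ind(f;0) = -1 + V(zh(z)-r^2;\Gamma_r)$. Since $zh(z)$ is analytic with a \emph{double} zero at the origin, it is locally two-to-one, hence $zh(z)=r^2$ has at most two solutions in the disk $\abs{z}<r$ for $r$ small, and the analytic argument principle immediately gives $V(zh(z)-r^2;\Gamma_r)\in\{0,1,2\}$. This trick converts the harmonic problem into a purely analytic one in a single line and avoids all explicit estimates.

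Your route---normalizing to $f(w)=2i\,\im(w)+\phi(w)$ and then counting the zeros of $\im f$ on $\Gamma_r$ to bound the signed crossings of the positive real ray---is more hands-on but also more self-contained: it uses nothing beyond elementary calculus and the definition of winding via ray crossings. The estimate you flag as the ``main obstacle'' does go through, since the $\theta$-derivative of the perturbation is $\bigO(r^2)$ uniformly while $2r\cos\theta$ is bounded below by $\sqrt{2}\,r$ on the arcs $\abs{\theta}\le\pi/4$ and $\abs{\theta-\pi}\le\pi/4$, giving exactly one zero of $\im f$ on each. The paper's method is slicker; yours arguably gives more geometric insight into \emph{why} the three indices $-1,0,+1$ arise, namely from the four sign combinations of $\re f$ at the two real-axis crossings.
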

\begin{proof}
Throughout we assume $z_0 = 0$ (otherwise we substitute $w = z - z_0$).

If $z_0$ is a regular zero, i.e., either sense-preserving or sense-reversing,
then the Jacobian $J_f(z_0) = \abs{h'(z_0)}^2 - 1$ of $f$ at $z_0$ is nonzero, 
so that $f$ is locally one-to-one (injective).
Thus $f$ maps a sufficiently small circle around $z_0$ to a Jordan curve containing the 
origin in its interior, from which it follows that the winding of $f$ along the 
circle is $+1$ (if $f$ sense-preserving at $z_0$) or $-1$ (if $f$ sense-reversing at 
$z_0$).

If $z_0 = 0$ is a singular zero of $f$, we have $h(0) = 0$ and $\abs{h'(0)} = 1$, so that the analytic
function $h$ has a
simple zero at $0$.  Let $R > 0$ such that $h$ is analytic in
    $D \defby \{z \in \C : \abs{z} < R\}$ and such that $z_0 = 0$ is the only zero of $f$
inside the circle of radius $R$ around the origin.
The function $z h(z)$ is analytic in $D$, and has a double
zero at $0$.  By~\cite[Theorem~3.4.11]{Wegert2012} (``$zh(z)$ is
locally bi-valent''), there exist $\varepsilon, \delta > 0$ such
that any $w$ satisfying $\abs{w} < \delta$ has two preimages in the disk $\{z \in \C
    : \abs{z}
< \varepsilon$\}.  Let $0 < r < \min\{R, \varepsilon,
\sqrt{\delta}\}$
and write
\begin{equation*}
h(z) - \conj{z} = \frac{1}{z} ( z h(z) - \abs{z}^2 ),
\end{equation*}
which does not vanish on $\Gamma = \{z \in \C : \abs{z} = r\}$,
since by construction $0$ is the only zero of $f(z) = h(z) - \conj{z}$
in $D$.  This allows us to compute
\begin{equation*}
    \ind(f;0) = V(f; \Gamma) = V(z^{-1}; \Gamma) + V(z h(z) - r^2; \Gamma) = -1 + V(zh(z)-r^2; 
\Gamma)
\end{equation*}
and it remains to show $0 \leq V(z h(z) - r^2; \Gamma) \leq 2$.
Since $z h(z) - r^2$ is analytic in $D$, $V(zh(z)-r^2; 
\Gamma) \geq 0$ is the number of zeros of $z h(z) - r^2$ interior to $\abs{z} = 
r$ by the argument principle for analytic functions.  Since $r^2 <
    \delta$, there are exactly two zeros in $\{z \in \C : \abs{z} <
    \varepsilon\}$, and hence at most two such zeros in the smaller
    disk of radius $r$. It follows that  $V(z h(z) - r^2; \Gamma) \le 2$.
\end{proof}

\subsection{Examples}
\label{sect:examples}

In Theorem~\ref{thm:index_bound} we have seen that functions of the
type $h(z) - \conj{z}$ have index $\{-1, 0, 1\}$ at a singular zero.
All three cases occur, and we give an explicit example for each.  The
four examples we consider are illustrated in
Figure~\ref{fig:sing_examples}.  While the index is easily spotted
from these phase portrait, computing the index is much more involved,
as we will see.

\begin{figure}[t!]
\begin{center}
\includegraphics[width=.49\textwidth]{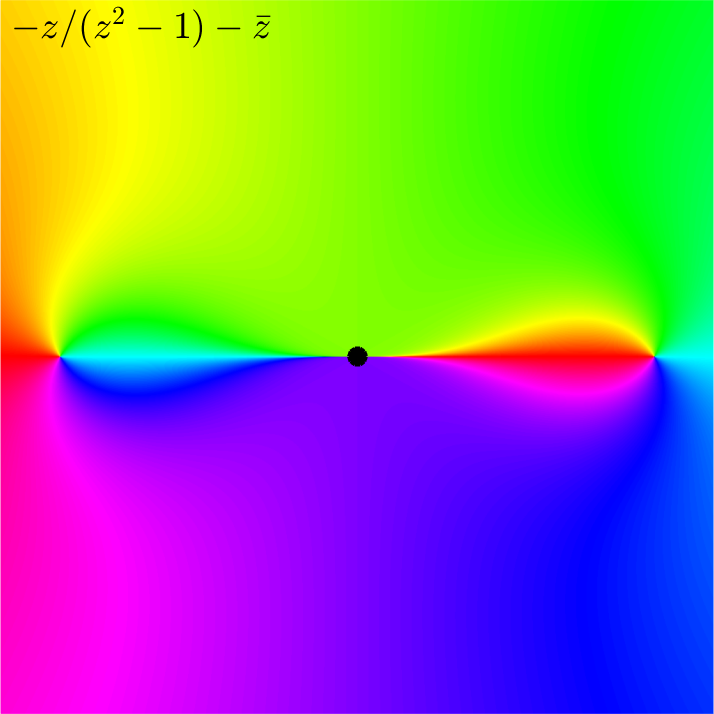} \hfill
\includegraphics[width=.49\textwidth]{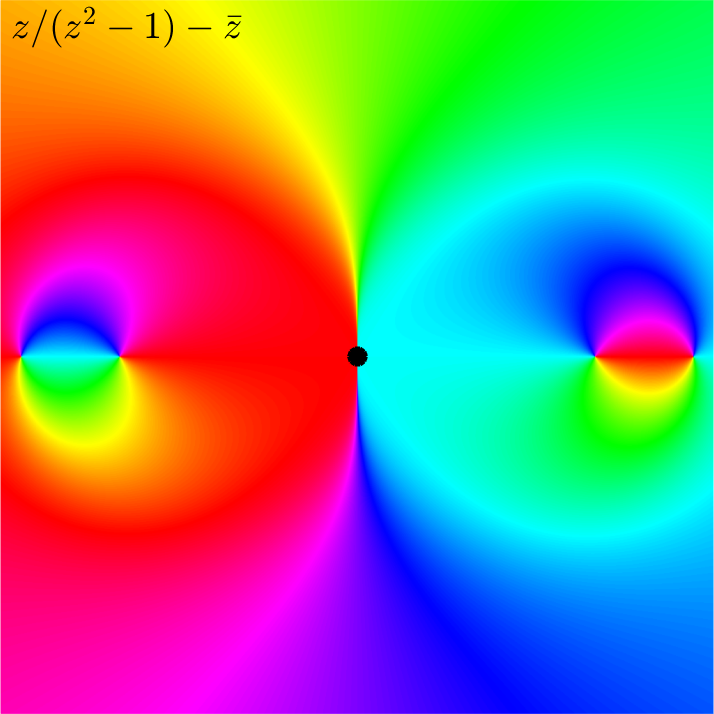} \\[5pt]
\includegraphics[width=.49\textwidth]{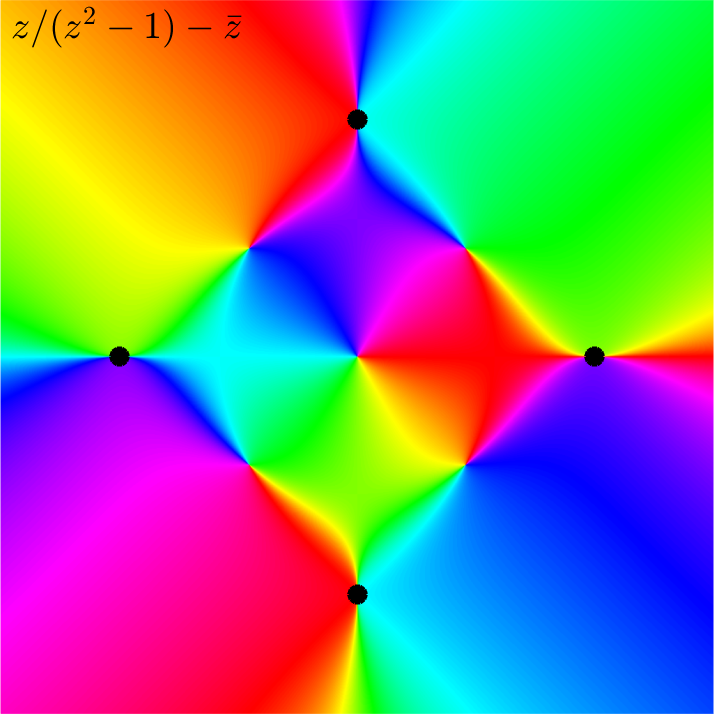} \hfill
\includegraphics[width=.49\textwidth]{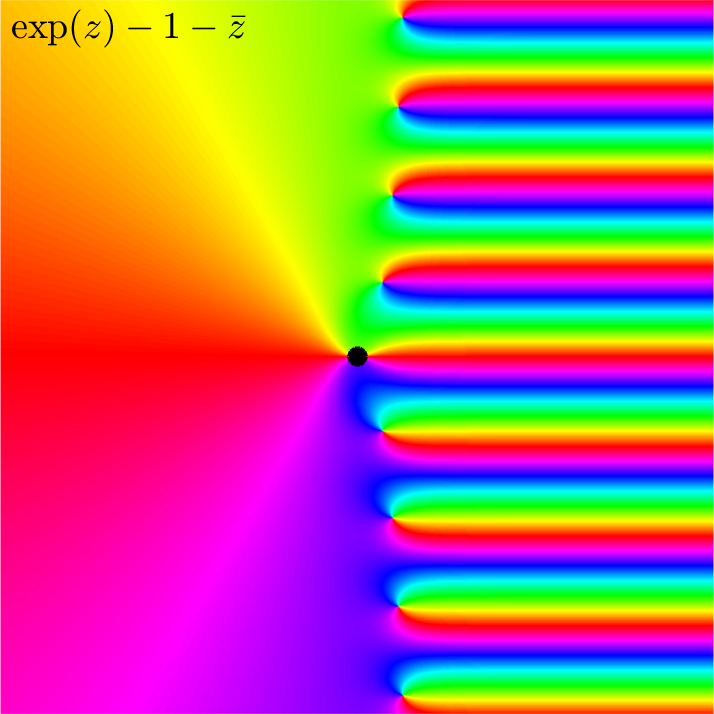}
\end{center}
    \caption{Phase portraits for the function in
    Examples~\ref{ex:index+1}--\ref{ex:exp}.  The singular zeros
    discussed in these examples are marked with black
    dots.  The indices of the marked zeros are $+1$ (top left),
    $-1$ (top right) and $0$ (bottom row). \label{fig:sing_examples}}
\end{figure}

\begin{example} \label{ex:index+1}
We show that $f(z) = h(z) - \conj{z}$ with $h(z) = -z/(z^2 - 1)$ 
has a singular zero with Poincar\'{e} index $+1$.

We compute the zeros of $f$.  We have that $f(z) = 0$ is equivalent to
\begin{equation}
\frac{z^2}{1 - z^2} = \abs{z^2}, \label{eqn:zeros_of_f}
\end{equation}
implying $z^2 = \tfrac{ \abs{z}^2 }{ 1 + \abs{z}^2 } \geq 0$. 
Then~\eqref{eqn:zeros_of_f} is equivalent to $z^2 = z^2 (1-z^2)$, showing
that $z_0 = 0$ is the only zero of $f$.  Since
\begin{equation}
\abs{h'(z)} = \abslr{\frac{z^2 + 1}{(z^2-1)^2}}, \label{eqn:h1_prime}
\end{equation}
we have $\abs{h'(0)} = 1$, so that $z_0$ is a singular zero of $f$.

Further, $f$ has the two simple poles $\pm 1$ with $\ind(f; \pm 1) = -1$;
see Proposition~\ref{prop:index_rational}.  Combining 
Proposition~\ref{prop:global_winding} and Theorem~\ref{thm:argument_principle},
we find on a sufficiently large circle $\Gamma$
\begin{equation*}
-1 = V(f; \Gamma) = \ind(f; -1) + \ind(f; 0) + \ind(f; 1)
= -2 + \ind(f; 0),
\end{equation*}
hence $\ind(f; 0) = +1$.  Thus $0$ is a singular zero with Poincar\'e index
$+1$.
\end{example}

\begin{example}
\label{ex:index-1}
We show that $f(z) = h(z) - \bar{z}$ with $h(z) = {z}/(z^2 - 1)$ 
has a singular zero with Poincar\'e index $-1$.

Let us compute the zeros of $f$.  Clearly, $z_0 = 0$ is a zero of
$f$.  For $z \neq 0$, we have that $f(z) = 0$ is equivalent to
$(\abs{z}^2 - 1) z^2 = \abs{z}^2$.  Writing $z = \rho e^{i \varphi}$ with
$\rho > 0$ and $\varphi \in \R$, this is equivalent to
$(\rho^2 - 1) e^{i 2 \varphi} = 1$.
In particular, $\rho \neq 1$ and $e^{i 2 \varphi} \in \R$, so that either
$e^{i 2 \varphi} = -1$ or $e^{i 2 \varphi} = +1$.  In the first case $e^{i 2
\varphi} = -1$ implies the contradiction $\rho = 0$.  In the second case 
$e^{i 2 \varphi} = 1$ implies $\varphi = 0, \pi$ and we find $\rho = \sqrt{2}$. 
Hence $f$ has the three zeros $0, \pm \sqrt{2}$.
Since $\abs{h'(z)}$ is given by~\eqref{eqn:h1_prime} we have
\begin{equation*}
\abs{h'(0)} = 1 \quad \text{and} \quad
\abs{h'(\pm \sqrt{2})} = 3 > 1,
\end{equation*}
so that $0$ is a singular zero of $f$, and $\pm \sqrt{2}$ are
sense-preserving zeros of $f$ and thus have Poincar\'{e} index $+1$ by
Proposition~\ref{prop:index_rational}.

Further $f$ has the two simple poles $\pm 1$ with $\ind(f; \pm 1) = -1$;
see Proposition~\ref{prop:index_rational}.  Combining
Proposition~\ref{prop:global_winding} and the argument principle as in
Example~\ref{ex:index+1}, we find that $\ind(f; 0) = -1$. 
Thus, $z_0 = 0$ is a singular zero of $f$ with Poincar\'e index $-1$.
\end{example}

\begin{example}
\label{ex:index0}
We show that $f(z) = h(z) - \bar{z}$ with $h(z) = 2 z^3 + \tfrac{1}{8 z}$ 
has singular zeros with Poincar\'e index $0$.

We compute the zeros of $f$.  Clearly, $z = 0$ is a pole of $h$ and
thus not a zero of $f$.  Then, $f(z) = 0$ is equivalent to
$2 z^4 + \tfrac{1}{8} - \abs{z}^2 = 0$.
Writing $z = \rho e^{i \varphi}$, where $\rho > 0$ and $\varphi \in \R$, we
find
\begin{equation}
2 \rho^4 e^{i 4 \varphi} + \tfrac{1}{8} - \rho^2 = 0, \label{eqn:polar_zeros_f3}
\end{equation}
so that $e^{i 4 \varphi}$ is real.

Consider first the case $e^{i 4 \varphi} = -1$.  We then have
$\varphi = \tfrac{(2k+1) \pi}{4}$ with $k \in \Z$,
and~\eqref{eqn:polar_zeros_f3} becomes $2 \rho^4 + \rho^2 - \tfrac{1}{8} = 0$,
implying $\rho^2 = \tfrac{\sqrt{2}-1}{4}$.  Thus, $f$ has the four zeros
$z_k = (\tfrac{\sqrt{2}-1}{4})^{\frac{1}{2}} e^{i \frac{(2k+1) \pi}{4}}$, $k =
4, 5, 6, 7$.  We determine their type.
We have $h'(z) = 6 z^2 - \tfrac{1}{8 z^2}$.  A short computation yields
$h'(z_k) = ( 2 \sqrt{2} - 1 ) (-1)^k i$.  Thus $\abs{h'(z_k)} > 1$ and the
$z_k$ are sense-preserving zeros of $f$.  By
Proposition~\ref{prop:index_rational} we have $\ind(f; z_k) = +1$.

In the second case, $e^{i 4 \varphi} = +1$, we have $\varphi = k \tfrac{\pi}{2}$
for some $k \in \Z$, and equation~\eqref{eqn:polar_zeros_f3} becomes
$0 = 2 \rho^4 - \rho^2 + \tfrac{1}{8} = 2 (\rho^2 - \tfrac{1}{4})^2$, which
yields $\rho = \tfrac{1}{2}$.  Thus, $f$ has another four zeros $z_k =
\tfrac{1}{2} e^{i k \frac{\pi}{2}}$, $k = 0, 1, 2, 3$.  A short computation
shows $h'(z_k) = (-1)^k$, so that these $z_k$ are singular zeros of $f$.
We show that $z_0, z_1, z_2$, and $z_3$ have the same Poincar\'e index. 
Note that $f(z) = e^{i \frac{\pi}{2}} f( e^{i \frac{\pi}{2}} z)$ holds for
all $z$.  Denote by $\Gamma_0$ a small circle centered at $z_0$ suitable for the
computation of $\ind(f; z_0)$, recall Definition~\ref{defn:Poincare_index}.
Fix $k \in \{ 1, 2, 3 \}$.  Set $\Gamma_k \coloneq e^{i k \tfrac{\pi}{2}}
\Gamma_0$, which then is a circle centered at $z_k$ with $\ind(f; z_k) =
V(f; \Gamma_k)$.  We find
\begin{equation*}
\begin{split}
\ind(f; z_0) &= V(f; \Gamma_0)
= V( e^{i k \frac{\pi}{2}} f( e^{i k \frac{\pi}{2}} z); \Gamma_0)
= V( f( e^{i k \frac{\pi}{2}} z); \Gamma_0) \\
&= V( f(z); \Gamma_k) = \ind(f; z_k).
\end{split}
\end{equation*}
Thus the singular zeros $z_0, z_1, z_2$, and $z_3$ of $f$ all have the same
Poincar\'e index.

Clearly, $0$ is the only pole of $f$ and has $\ind(f; 0) = -1$.  Now,
since $h(z) = (16 z^4 + 1)/(8z)$ is of type $(4, 1)$, we find by
Proposition~\ref{prop:global_winding} and the argument principle
\begin{equation*}
3 = \ind(f; 0) + \sum_{k=0}^7 \ind(f; z_k) = 3 + 4 \ind(f; z_0).
\end{equation*}
Hence $\ind(f; z_k) = 0$ for $k = 0, 1, 2, 3$, and we have shown that $f$
has singular zeros with Poincar\'e index $0$.
\end{example}

\begin{example}
\label{ex:exp}
Let $f(z) = h(z) - \conj{z} = e^z - 1 - \conj{z}$, which has an isolated zero 
at the origin.  Since $\abs{h'(0)} = e^0 = 1$, it is a singular zero.
The phase portrait (Figure~\ref{fig:sing_examples}) indicates that $f$ has 
index $0$, but determining the index as in the previous examples is not 
possible.
\end{example}

All these computations were quite laborious.  With our main results
Theorems~\ref{thm:index_sing_zero}
and~\ref{thm:index_sing_zero_general} below, we will be able to compute
these indices very easily by looking at the power series of the
analytic part; see Section~\ref{sect:examples2}.

\section{Determining the index from the power series}
\label{sect:index}

Let $f(z) = h(z) - \conj{z}$ be a complex-valued harmonic function,
where $h$ is an analytic function.  We aim to
characterize the index of $f$ at an isolated zero $z_0$ by the
coefficients of the Taylor series of $h$ at $z_0$.
For regular zeros $z_0$, the index can be easily inferred from the
series, which is a direct consequence of
Definition~\ref{defn:sense-pres-rev}, and
Theorem~\ref{thm:index_bound}.

\begin{prop} \label{prop:regular}
Let $f(z) = h(z) - \conj{z}$, with $h$ analytic, have a zero at $z_0 \in \C$,
so that
\begin{equation*}
f(z) =  h(z) - \conj{z} = \sum_{k=1}^\infty a_k (z-z_0)^k - \conj{z-z_0}
\end{equation*}
near $z_0$.
\begin{enumerate}
\item If $\abs{a_1} > 1$ the function $f$ is sense-preserving at $z_0$ and $\ind(f; z_0) = +1$.
\item If $\abs{a_1} < 1$ the function $f$ is sense-reversing at $z_0$ 
and $\ind(f; z_0) = -1$.
\end{enumerate}
\end{prop}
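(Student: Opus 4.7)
The plan is to reduce the proposition directly to Definition~\ref{defn:sense-pres-rev} and parts~(1)--(2) of Theorem~\ref{thm:index_bound}. There is no real obstacle: the characterization of regular zeros via $\abs{h'(z_0)}$ is exactly what is being restated here, so the only genuine work is to match the coefficient $a_1$ from the given series with the derivative $h'(z_0)$.

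First I would read off from the hypothesis $f(z_0) = 0$ that $h(z_0) = \conj{z_0}$. The Taylor expansion of $h$ around $z_0$ then takes the form $h(z) = \conj{z_0} + \sum_{k \geq 1} \tfrac{h^{(k)}(z_0)}{k!} (z-z_0)^k$, and comparing with the expression $f(z) = \sum_{k \geq 1} a_k (z-z_0)^k - \conj{z - z_0}$ stated in the proposition yields $a_1 = h'(z_0)$ (and, more generally, $a_k = h^{(k)}(z_0)/k!$, but only the first coefficient is needed here).

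With this identification in hand, the condition $\abs{a_1} > 1$ is precisely the condition $\abs{h'(z_0)} > 1$ for $f$ to be sense-preserving at $z_0$ in Definition~\ref{defn:sense-pres-rev}, and $\abs{a_1} < 1$ is precisely the condition for $f$ to be sense-reversing. In either case $z_0$ is a regular zero, so Theorem~\ref{thm:index_bound}(1)--(2) applies and gives $\ind(f; z_0) = +1$ in the sense-preserving case and $\ind(f; z_0) = -1$ in the sense-reversing case. The entire proposition is thus essentially a notational repackaging: once $a_1 = h'(z_0)$ is recognized, the conclusion is immediate from results already established.
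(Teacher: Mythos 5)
Your proof is correct and matches the paper's own route exactly: the paper states that Proposition~\ref{prop:regular} is a direct consequence of Definition~\ref{defn:sense-pres-rev} and Theorem~\ref{thm:index_bound}, which is precisely your reduction after identifying $a_1 = h'(z_0)$ from the Taylor expansion.
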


\subsection{Determining the index of singular zeros}

The preceding proposition shows that the index of a regular zero is
determined by the first term of the Taylor series of $h$.  The case of
a singular zero is more subtle than that of a regular zero, and the
occurrence of a singular zero is typically excluded in the published
literature on harmonic mappings.  However, as we will see in the
following theorem, the index of a singular zero is determined from the
leading terms in the Taylor series of $h$ as well.

\begin{thm}
\label{thm:index_sing_zero}
Let the complex-valued harmonic function
\begin{equation*}
f(z) = h(z) - \conj{z} = z + \sum_{k=2}^\infty a_k z^k - \conj{z}
\end{equation*}
have an isolated singular zero at the origin.  Let $n \geq 2$ be the smallest 
index with $a_n \neq 0$, then
\begin{equation*}
\ind(f; 0) =
\begin{cases}
    \phantom{+} 0 & \re(a_n) \neq 0, \; n \, \text{even} \\
    +1 & \re(a_n)  > 0, \; n \,\text{odd} \\
    -1 & \re(a_n)  < 0, \; n \, \text{odd}
\end{cases}.
\end{equation*}
\end{thm}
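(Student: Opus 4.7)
The plan is to build directly on the reduction carried out in the proof of Theorem~\ref{thm:index_bound}. There we already showed that, for sufficiently small $r>0$,
\begin{equation*}
\ind(f;0) = -1 + V(z h(z) - r^2; \Gamma_r),
\end{equation*}
where $\Gamma_r = \{|z|=r\}$ and $z h(z) - r^2$ is analytic with at most two zeros in $|z|<r$. So the entire theorem reduces to determining the exact number $N \in \{0,1,2\}$ of zeros of the analytic polynomial-like function
\begin{equation*}
F_r(z) \coloneq z h(z) - r^2 = (z-r)(z+r) + a_n z^{n+1} + \bigO(z^{n+2})
\end{equation*}
that lie inside the circle $|z| = r$. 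The unperturbed polynomial $(z-r)(z+r)$ has its two zeros exactly on $\Gamma_r$, and the whole question becomes: for which sign configurations does the higher-order perturbation $a_n z^{n+1} + \bigO(z^{n+2})$ push them inside versus outside?

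For small $r$, I would locate the two zeros of $F_r$ near $\pm r$ by a standard implicit-function/Newton-perturbation argument. Writing $z = \pm r + w$ and solving $F_r(\pm r + w) = 0$ to leading order in $w$ gives
\begin{equation*}
w_+ = -\frac{a_n r^n}{2} + \bigO(r^{n+1}), \qquad
w_- = \frac{(-1)^{n+1} a_n r^n}{2} + \bigO(r^{n+1}),
\end{equation*}
so the two zeros are $z_+ = r + w_+$ and $z_- = -r + w_-$. Bi-valence of $zh(z)$ at the origin (the tool already used in the proof of Theorem~\ref{thm:index_bound}) guarantees these are the \emph{only} two zeros of $F_r$ in a neighbourhood of the origin once $r$ is small enough.

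Now I would compute $|z_\pm|^2 - r^2$ to leading order:
\begin{equation*}
|z_+|^2 - r^2 = -\re(a_n)\, r^{n+1} + \bigO(r^{n+2}), \qquad
|z_-|^2 - r^2 = (-1)^{n}\re(a_n)\, r^{n+1} + \bigO(r^{n+2}).
\end{equation*}
Under the standing hypothesis $\re(a_n)\neq 0$, for $r$ small enough the signs of these two expressions are dictated by the leading term, so neither zero lies on $\Gamma_r$ and one reads off:
if $n$ is odd, both zeros are simultaneously inside (when $\re(a_n)>0$) or both outside (when $\re(a_n)<0$), giving $N=2$ or $N=0$ and hence $\ind(f;0) = +1$ or $-1$; if $n$ is even, the two expressions have opposite signs so exactly one of $z_\pm$ lies inside $\Gamma_r$, giving $N=1$ and $\ind(f;0) = 0$. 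This matches the three cases of the theorem.

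The main obstacle is the perturbation step: verifying that the two expansions above are valid, that the $\bigO$-remainders really are of order $r^{n+2}$ uniformly in the relevant regime, and that $z_+ \neq z_-$ so that we genuinely count two roots. I would handle this either by applying the implicit function theorem to $F_r$ in a rescaled variable, or — more robustly — by a Rouch\'e comparison on small circles around $\pm r$ of radius, say, $r^n \log(1/r)$: on those circles the dominant term is $\pm 2rw$, which exceeds $|a_n| r^{n+1}$ and all higher remainders, isolating exactly one zero in each disk. The rest of the argument is then just tracking signs of real parts.
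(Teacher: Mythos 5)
Your approach is correct, and it is genuinely different from the paper's. The paper never returns to the factorization $f(z)=z^{-1}\bigl(zh(z)-\abs{z}^2\bigr)$ after the index bound; instead it proves the theorem by a chain of reductions on the harmonic function itself: truncating the tail after $a_nz^n$ (via a delicate lower bound for $\abs{az^n+z-\conj{z}}$ on small circles, Lemma~\ref{lem:ming_reanneq0}, plus Rouch\'e), normalizing $a_n$ to $\pm1,\pm i$ by homotopies (Lemmas~\ref{lem:rays}, \ref{lem:halfplanes}), reducing $n$ modulo $2$ (Lemma~\ref{lem:nreduction}), and finally computing the index of the model functions $az^n+z-\conj{z}$ by finding \emph{all} their zeros in $\C$ and invoking the global winding and the argument principle (Lemma~\ref{lem:compute_index}). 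You instead stay entirely local: reduce to counting the zeros of the analytic function $F_r(z)=zh(z)-r^2$ inside $\abs{z}=r$ (exactly the device from the proof of Theorem~\ref{thm:index_bound}), and decide inside/outside by perturbing the double root of $z^2-r^2$. Your expansions and sign bookkeeping check out: with $w_\pm$ as you compute, $\abs{z_+}^2-r^2=-\re(a_n)r^{n+1}+\bigO(r^{n+2})$ and $\abs{z_-}^2-r^2=(-1)^n\re(a_n)r^{n+1}+\bigO(r^{n+2})$ (the $\abs{w_\pm}^2$ term is $\bigO(r^{2n})=\bigO(r^{n+2})$ since $n\ge2$), and bi-valence of $zh(z)$ guarantees these are the only zeros. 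One technical caveat: the single Rouch\'e comparison on circles of radius $r^n\log(1/r)$ around $\pm r$ only localizes each zero to that precision, which is too coarse to read off the sign of $\abs{z_\pm}^2-r^2$ (that needs $w_\pm$ pinned down to $\bigO(r^{n+1})$); you need a second, finer step, e.g. comparing $F_r(r+w)$ with $2rw+a_nr^{n+1}$ on circles $\abs{w+a_nr^n/2}=Kr^{n+1}$, or, cleanest of all, writing $zh(z)=\phi(z)^2$ with $\phi(z)=z+\tfrac{a_n}{2}z^n+\cdots$ conformal at $0$, so that the two zeros are exactly $\phi^{-1}(\pm r)$ for a \emph{fixed} analytic $\phi^{-1}$, which makes the expansions and their uniformity in $r$ immediate. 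This is within the toolkit you already invoke, so it is a refinement rather than a gap. What your route buys is a shorter, purely local argument that works verbatim for any analytic $h$ and makes transparent why the case $\re(a_n)=0$ is excluded (the leading term of $\abs{z_\pm}^2-r^2$ then vanishes and higher-order terms decide); what the paper's longer route buys is the complete analysis of the model polynomials $az^n+z-\conj{z}$, including purely imaginary $a$ (Remark~\ref{rem:comment3}), which your argument does not yield.
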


\begin{rem}
\label{rem:comment1}
Recall from Proposition~\ref{prop:regular} that the index of a regular zero
$z_0$ of $h(z) - \conj{z}$ is completely determined by the \emph{first}
derivative of $h$ at $z_0$.  If $z_0$ is singular, the index is, except for
the case $\re(a_n) = 0$, completely determined by the \emph{first two}
non-vanishing derivatives of $h$ at $z_0$.
\end{rem}

\begin{rem}
The only case not covered by Theorem~\ref{thm:index_sing_zero} is that of
a purely imaginary coefficient $a_n$.  No characterization of the index is 
given in this case (of course it still is in $\{0, \pm 1\}$).  We have 
constructed examples showing that in this
particular case the first two non-vanishing derivatives are
\emph{not} sufficient to characterize the index at $0$.
This behaviour is illustrated in Section~\ref{sect:imag_diff}.
Hence a complete characterization requires local or global information about 
$h$ beyond of what is used in Theorem~\ref{thm:index_sing_zero}.
\end{rem}

\begin{rem}
\label{rem:comment3}
For harmonic polynomials of the special form
\begin{equation*}
f(z) = z + a_n z^n - \conj{z}, \quad a_n \neq 0, 
\end{equation*}
we give a complete characterization of the index of $f$ at $0$ in 
Lemma~\ref{lem:compute_index}, that is, including the aforementioned case 
$\re(a_n) = 0$.  As it turns out, the index in $0$ is always $+1$ for these 
functions if $a_n$ is purely imaginary.
\end{rem}

Theorem~\ref{thm:index_sing_zero} makes two normalization assumptions,
neither of which is restrictive.  The first one asserts that the
singular zero of interest is in the origin.  If $z_0$ is \emph{any} zero of
$f$, then $0 = h(z_0) - \conj{z_0}$, and expanding $h$ in a power
series yields
\begin{equation*}
f(z) = \sum_{k=1}^\infty a_k (z-z_0)^k - \conj{z-z_0}.
\end{equation*}
After the change of variables $w = z - z_0$, the zero is 
at the origin, and the index does not change.

The second normalization is $a_1 = 1$.  To see that this is not
a restriction, consider for an arbitrary $a_1 \neq 0$ the function
\begin{equation*}
f(z) = h(z) - \conj{z} = a_1 z + \sum_{k=2}^\infty a_k z^k - \conj{z},
\end{equation*}
and suppose that it has an isolated singular zero at $z_0 = 0$.  This
implies $\abs{a_1} = \abs{h'(0)} = 1$, so that $a_1 = e^{i \theta}$ for some 
$\theta \in \R$.  From
\begin{equation*}
e^{- i \theta/2} f(z) = e^{i \theta/2} z + \sum_{k=2}^\infty a_k e^{- i (k+1) 
\theta/2} (e^{i \theta/2} z)^k - \conj{e^{i \theta/2} z}
\end{equation*}
we find
\begin{equation*}
V(f; \Gamma) = V(e^{-i \theta/2} f(z); \Gamma)
= V(e^{-i \theta/2} f(e^{- i \theta/2} z); \Gamma)
\end{equation*}
for all sufficiently small circles $\Gamma$.  Therefore $f$ and
$z + \sum_{k=2}^\infty a_k e^{- i (k+1) \theta/2} z^k - \conj{z}$
have the same index at the origin, and we can assume $a_1 = 1$.
Substituting back, we can reformulate
Theorem~\ref{thm:index_sing_zero} without the discussed
normalizations.

\begin{thm}
\label{thm:index_sing_zero_general}
Let the complex-valued harmonic function
\begin{equation*}
f(z) = h(z) - \conj{z}
\end{equation*}
with analytic $h$
have an isolated singular zero at $z_0 \in \C$, and $h'(z_0) = e^{i\theta}$.
Let $n \geq 2$ be the smallest index with $h^{(n)}(z_0) = c e^{i\varphi} \neq 
0$, where $c>0$, and set
\begin{equation}
\label{eq:eta}
\eta \coloneq \cos(\varphi - \tfrac{n+1}{2} \theta),
\end{equation}
then
\begin{equation*}
\ind(f; z_0) =
\begin{cases}
\phantom{+} 0 & \eta \neq 0, \; n \text{ even} \\
+1 & \eta > 0, \; n \text{ odd} \\
-1 & \eta < 0, \; n \text{ odd}
\end{cases}.
\end{equation*}
\end{thm}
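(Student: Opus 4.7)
The plan is to deduce Theorem~\ref{thm:index_sing_zero_general} directly from its normalized counterpart Theorem~\ref{thm:index_sing_zero} by applying the two index-preserving transformations already described in the discussion between the two statements. No further winding-number analysis is needed; the entire argument reduces to verifying that the quantity $\eta$ defined in~\eqref{eq:eta} coincides, up to a positive factor, with the real part of the leading Taylor coefficient of $h$ after normalization.

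First, translating by $w = z - z_0$ and using $h(z_0) = \conj{z_0}$ replaces $f$ by $\tilde f(w) = \sum_{k \ge 1} a_k w^k - \conj{w}$, where $a_k = h^{(k)}(z_0)/k!$, so that $a_1 = e^{i\theta}$ and $a_n = (c/n!)\,e^{i\varphi}$. This translation preserves the index, so $\ind(f; z_0) = \ind(\tilde f; 0)$. Next, performing the unitary change of variables $F(u) \coloneq e^{-i\theta/2}\,\tilde f(e^{-i\theta/2} u)$ yields, after the computation already carried out in the discussion above,
\begin{equation*}
F(u) = u + \sum_{k=2}^\infty a_k\, e^{-i(k+1)\theta/2}\, u^k - \conj{u}.
\end{equation*}
Since multiplying $\tilde f$ by the unimodular constant $e^{-i\theta/2}$ and rotating its argument preserve the winding along small circles centered at the origin (cf.\ Proposition~\ref{prop:winding}), we have $\ind(F;0) = \ind(\tilde f; 0)$. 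In particular $F$ has an isolated singular zero at $0$ with $F'(0) = 1$, and its first nonvanishing coefficient beyond the linear term is $\tilde a_n \coloneq a_n\, e^{-i(n+1)\theta/2} = (c/n!)\, e^{i(\varphi - (n+1)\theta/2)}$.

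Taking real parts gives
\begin{equation*}
\re(\tilde a_n) = \frac{c}{n!}\,\cos\!\bigl(\varphi - \tfrac{n+1}{2}\theta\bigr) = \frac{c}{n!}\,\eta,
\end{equation*}
so, using $c > 0$, one has $\sign(\re(\tilde a_n)) = \sign(\eta)$ and $\re(\tilde a_n) \ne 0$ if and only if $\eta \ne 0$. Applying Theorem~\ref{thm:index_sing_zero} to $F$ now converts its case distinction on $\re(\tilde a_n)$ into the claimed case distinction on $\eta$, completing the proof. I do not anticipate any real obstacle: the argument is essentially bookkeeping once Theorem~\ref{thm:index_sing_zero} is in hand, and the only point requiring care is tracking the phase factor $e^{-i(n+1)\theta/2}$ picked up from the change of variable, which explains the combination $\varphi - \tfrac{n+1}{2}\theta$ appearing in~\eqref{eq:eta}.
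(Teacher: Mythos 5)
Your proposal is correct and is essentially the paper's own argument: the paper derives Theorem~\ref{thm:index_sing_zero_general} from Theorem~\ref{thm:index_sing_zero} by exactly this translation plus the unimodular rescaling $e^{-i\theta/2}f(e^{-i\theta/2}\,\cdot\,)$, which multiplies $a_k$ by $e^{-i(k+1)\theta/2}$ and turns the condition on $\re(\tilde a_n)$ into the condition on $\eta$. Your bookkeeping of the phase factor and the sign of $\re(\tilde a_n) = (c/n!)\,\eta$ matches the paper's computation.
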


\subsection{Proof of Theorem~\ref{thm:index_sing_zero}}

Throughout we consider the complex-valued harmonic function
\begin{equation*}
f(z) = h(z) - \conj{z} = z + \sum_{k=2}^\infty a_k z^k - \conj{z},
\end{equation*}
which has a singular zero at the origin, and we assume that the zero
is
isolated.  Then not all coefficients $a_k$ with $k \geq 2$ can
vanish: If $a_k = 0$ for all $k \geq 2$, then $f(z) = z - \conj{z}$,
which vanishes on the whole real line, so that $0$ is not an isolated zero.
Further we denote by $n \geq 2$ the smallest integer satisfying $a_n
\neq 0$.  We thus have
\begin{equation}
    \label{eq:f_series}
f(z) = z + a_n z^n + \sum_{k=n+1}^\infty a_k z^k - \conj{z}, \quad a_n \neq 0.
\end{equation}
The proof of Theorem~\ref{thm:index_sing_zero} is divided in several steps, and 
an outline is as follows.

\begin{enumerate}
\item We show that $f$ and $z + a_n z^n - \conj{z}$ have the same index at $0$ 
(Lemma~\ref{lem:cutoff_reneq0}).
\item We show that $a_n$ can be reduced to one of $\pm 1, \pm i$ 
(Lemmas~\ref{lem:rays} and~\ref{lem:halfplanes}).
\item We show that then $n$ can be replaced by $2$ when $n$ is even and by 
$3$ when $n$ is odd (Lemma~\ref{lem:nreduction}).
\item We explicitly compute the index in each of these cases
(Lemma~\ref{lem:compute_index}).
\end{enumerate}

We begin by showing that the series~\eqref{eq:f_series} can be
truncated after the first non-vanishing coefficient without changing
the index (Lemma~\ref{lem:cutoff_reneq0} below).
For this we need the following technical, preparatory lemma.

\begin{lem} \label{lem:ming_reanneq0}
Let
\begin{equation*}
g(z) = a z^n + z - \conj{z}
\end{equation*}
where $n \geq 2$ and $a \neq 0$, and let $0 < c < 1$.  If $\re(a) \neq 0$, then
\begin{equation}
\abs{g(z)} > c \abs{\re(a)} \rho^n \quad \text{on} \quad \abs{z} = \rho
\label{eqn:ming_reanneq0}
\end{equation}
and if $\re(a) = 0$
\begin{equation}
\abs{g(z)} > c \frac{n}{2} \abs{a}^2 \rho^{2n-1} \quad \text{on} \quad \abs{z} 
= \rho
\label{eqn:ming_rean0}
\end{equation}
for all sufficiently small $\rho$.
\end{lem}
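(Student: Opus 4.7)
The plan is to parametrize $\abs{z} = \rho$ by $z = \rho e^{i\varphi}$, write $a = \abs{a} e^{i\psi}$, and begin from the identity
\begin{equation*}
\abs{g(z)}^2 = \abs{a}^2\rho^{2n} + 4\abs{a}\rho^{n+1}\sin\varphi\sin(\psi+n\varphi) + 4\rho^2\sin^2\varphi,
\end{equation*}
obtained by expanding $\abs{az^n + (z-\conj{z})}^2$ with $z-\conj{z} = 2i\rho\sin\varphi$. Equivalently,
\begin{equation*}
\re g(z) = \rho^n\abs{a}\cos(\psi+n\varphi), \qquad \im g(z) = \rho^n\abs{a}\sin(\psi+n\varphi) + 2\rho\sin\varphi.
\end{equation*}
The strategy is to bound $\abs{g}$ by whichever of $\abs{\re g}$, $\abs{\im g}$ is favorable in each part of the circle.

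For the case $\re(a) \neq 0$, the $2n$ zeros $\varphi_j$ of $\cos(\psi+n\varphi)$ on $[0,2\pi)$ all satisfy $\sin\varphi_j \neq 0$: if $\sin\varphi_j = 0$, i.e., $\varphi_j \in \{0,\pi\}$, then $\cos(\psi+n\varphi_j) = 0$ would force $\psi \in \tfrac{\pi}{2} + \pi\Z$, contradicting $\re(a) \neq 0$. Set $m \coloneq \min_j \abs{\sin\varphi_j} > 0$, pick $\epsilon \in (c\abs{\re(a)}/\abs{a}, 1)$ (possible since $c < 1$), and choose $\delta > 0$ small enough that $\abs{\cos(\psi+n\varphi)} \geq \epsilon$ holds off the union of the arcs $I_j = (\varphi_j-\delta, \varphi_j+\delta)$, while $\abs{\sin\varphi} \geq m/2$ on each $I_j$. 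Off the arcs, $\abs{g} \geq \abs{\re g} \geq \abs{a}\epsilon\rho^n > c\abs{\re(a)}\rho^n$; on each $I_j$, $\abs{\im g} \geq 2\rho\abs{\sin\varphi} - \rho^n\abs{a} \geq \rho m - \rho^n\abs{a}$, which exceeds $c\abs{\re(a)}\rho^n$ whenever $\rho^{n-1} < m/(\abs{a}+c\abs{\re(a)})$.

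For the case $\re(a) = 0$, write $a = ib$ with $b \in \R\setminus\{0\}$, so
\begin{equation*}
\abs{g(z)}^2 = (b\rho^n\cos(n\varphi) + 2\rho\sin\varphi)^2 + b^2\rho^{2n}\sin^2(n\varphi).
\end{equation*}
For $\varphi$ bounded away from $\{0,\pi\}$ by a fixed $\delta > 0$, a crude bound yields $\abs{g} \geq 2\rho\abs{\sin\varphi} - \abs{b}\rho^n \gtrsim \rho$, easily dominating $\rho^{2n-1}$ for small $\rho$. The delicate region is near $\varphi = 0$ (the case near $\pi$ being analogous). Substituting $\varphi = \tau\rho^{n-1}$ and Taylor-expanding the trigonometric functions uniformly for $\tau$ in bounded sets gives
\begin{equation*}
\abs{g(z)}^2 = (b+2\tau)^2\rho^{2n} + B(\tau)\rho^{4n-2} + O(\rho^{6n-4}),
\end{equation*}
for some polynomial $B(\tau)$ with $B(-b/2) = b^4 n^2/4$. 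I would then partition this neighborhood into $\abs{b+2\tau} \geq S\rho^{n-1}$ and $\abs{b+2\tau} < S\rho^{n-1}$ for a constant $S$ chosen large. In the first sub-region, $(b+2\tau)^2\rho^{2n}$ alone exceeds $c^2 (n^2/4) b^4 \rho^{4n-2}$; in the second, $\tau$ is forced close to $-b/2$, so $B(\tau)$ is close to $b^4 n^2/4$ and the $\rho^{4n-2}$ term delivers precisely the leading order of the target bound, with $c < 1$ providing the margin that absorbs the remainder.

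The main obstacle is the case $\re(a) = 0$ near $\varphi = 0$: the claimed bound $c\tfrac{n}{2}\abs{a}^2\rho^{2n-1}$ is tight, capturing the actual leading-order minimum of $\abs{g}$ on $\abs{z}=\rho$. The Taylor expansion in the scaled angle $\tau = \varphi\rho^{1-n}$ must therefore be carried out to the correct order so that $b^4 n^2/4$ emerges as the asymptotic minimum of the $\rho^{4n-2}$-coefficient, and all error terms must be controlled uniformly in $\tau$ over bounded sets.
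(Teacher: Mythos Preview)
Your strategy—bounding $|g|$ by whichever of $|\re g|$, $|\im g|$ dominates on each part of the circle—works and differs from the paper's unified approach. The paper writes $|g(z)|=\rho^n\,|ae^{int}+2i\rho^{1-n}\sin t|$ and reads the second factor as the distance between the circle $t\mapsto-ae^{int}$ and the imaginary segment $t\mapsto 2i\rho^{1-n}\sin t$; the only delicate $t$ are those with $\bigl|\,2\rho^{1-n}\sin t-|a|\,\bigr|\le\delta$, which forces $t=\tfrac{|a|+\tau}{2}\rho^{n-1}+O(\rho^{3n-3})$, and on that window $|\re g|$ is bounded below directly—one decomposition handles both cases of $\re a$. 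Your route is more elementary but treats the two cases by different splittings.

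Two places need repair. In the $\re(a)\neq 0$ case your two requirements on $\delta$ pull opposite ways: containing $\{|\cos(\psi+n\varphi)|<\epsilon\}$ in the arcs needs $\delta$ \emph{large}, while $|\sin\varphi|\ge m/2$ on the arcs needs $\delta$ small, and for $c$ close to $1$ (take $n=2$, $a$ real) no $\delta$ satisfies both. The fix is to drop the specific threshold $m/2$: for $\delta$ just below $d_0:=\min_j\operatorname{dist}(\varphi_j,\{0,\pi\})$ the arcs still avoid $0,\pi$, so $|\sin\varphi|$ has \emph{some} positive infimum on them, while the infimum of $|\cos(\psi+n\varphi)|$ off the arcs is $\sin(n\delta)\to|\cos\psi|$ as $\delta\uparrow d_0$, so any $\epsilon\in(c|\cos\psi|,|\cos\psi|)$ works. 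In the $\re(a)=0$ case your Taylor expansion in $\tau=\varphi\rho^{1-n}$ is uniform only for $\tau$ bounded, yet the region $|\varphi|<\delta$ gives $|\tau|<\delta\rho^{1-n}\to\infty$; your two sub-regions (split by $|b+2\tau|\gtrless S\rho^{n-1}$) presuppose the expansion and do not cover the intermediate range. Insert a buffer zone $T\le|\tau|<\delta\rho^{1-n}$ with fixed $T>|b|$: there $|\sin\varphi|\ge(2/\pi)|\varphi|>(2T/\pi)\rho^{n-1}$, hence $|\im g|\ge(4T/\pi-|b|)\rho^n$, which dominates $\rho^{2n-1}$ for small $\rho$. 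The paper avoids this extra zone because its critical window already has width $O(\rho^{n-1})$ in $t$.
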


\begin{proof}
Fix $0 < c < 1$,
and write $z = \rho e^{it}$ with $\rho > 0$ and $t \in [0, 2\pi]$.  Then
\begin{equation*}
\abs{g(z)} = \abs{a \rho^n e^{int} + 2 i \rho \sin(t)}
= \rho^n \abs{ a e^{int} + \frac{2i}{\rho^{n-1}} \sin(t) }.
\end{equation*}
The absolute value on the right hand side is the distance between the curve 
$\sigma(t) = \frac{2i}{\rho^{n-1}} \sin(t)$ and the circle $\gamma(t) = - a 
e^{int}$.
We show that these curves have always the required distance.
Define $\delta > 0$ by $\delta = c \abs{\re(a)}$ for $\re(a) \neq 0$ and 
$\delta = c \frac{n}{2} \abs{a}^2 \rho^{n-1}$ if $\re(a) = 0$.  Then $\delta < 
\abs{a}$ for sufficiently small $\rho$.

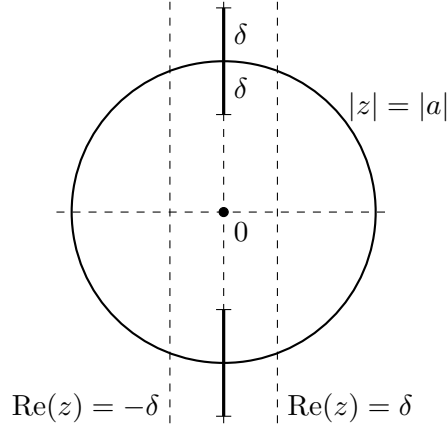
\begin{figure}
\begin{center}
\begin{tikzpicture}[scale = 2]
\draw [dashed] (-1.1, 0) -- (1.1, 0);
\draw [dashed] (0, -1.4) -- (0, 1.4);
\draw [fill=black] (0, 0) circle [radius = 0.03];
\node[below right] at (0, 0) {$0$};
\draw [thick] (0,0) circle [radius = 1];
\node [right] at (0.75, 0.7) {$\abs{z} = \abs{a}$};
\draw [dashed] (-0.3536, -1.4) -- (-0.3536, 1.4);
\draw [dashed] (0.3536, -1.4) -- (0.3536, 1.4);
\node[left] at (-0.3536, -1.3) {$\re(z) = - \delta$};
\node[right] at (0.3536, -1.3) {$\re(z) = \delta$};
\draw[very thick] (0, 1 - 0.3536) -- (0, 1+0.3536);
\draw[very thick] (0, -1 - 0.3536) -- (0, -1+0.3536);
    \draw (-0.05, 1-0.3536) -- (0.05, 1-0.3536);
    \draw (-0.05, 1+0.3536) -- (0.05, 1+0.3536);
    \draw (-0.05, -1-0.3536) -- (0.05, -1-0.3536);
    \draw (-0.05, -1+0.3536) -- (0.05, -1+0.3536);
    \node[right] at (0,1.18) {$\delta$};
    \node[right] at (0,1-0.18) {$\delta$};
\end{tikzpicture}
\end{center}
\caption{Illustration for the proof of Lemma~\ref{lem:ming_reanneq0}.  The thick 
line 
is the location of the sine term for $t$ 
satisfying~\eqref{eqn:tinterval_plus}.}
\label{fig:ming_reanneq0}
\end{figure}

Let first $t \in [0, \pi/2]$.  Then $\sigma(t)$ moves from $0$ to $2 i / 
\rho^{n-1}$ and thus crosses the circle with center $0$ and radius $\abs{a}$;
see Figure~\ref{fig:ming_reanneq0}.
We determine at which time $t \in [0, \pi/2]$ we have
\begin{equation}
\frac{2}{\rho^{n-1}} \sin(t) = \abs{a} + \tau, \quad - \delta \leq \tau \leq 
\delta.
\label{eqn:tinterval_plus}
\end{equation}
For $t \in [0, \pi/2]$ not satisfying~\eqref{eqn:tinterval_plus} we have 
$\abs{\sigma(t) - \gamma(t)} > \delta$, so that~\eqref{eqn:ming_reanneq0} 
or~\eqref{eqn:ming_rean0} is satisfied.
Equation~\eqref{eqn:tinterval_plus} is equivalent to
\begin{equation*}
\sin(t) = \frac{\abs{a} + \tau}{2} \rho^{n-1},
\quad - \delta \leq \tau \leq \delta.
\end{equation*}
For sufficiently small $\rho$ the values are close to zero so that
\begin{equation}
t = \frac{\abs{a} + \tau}{2} \rho^{n-1} + \bigO(\rho^{3n-3}),
\quad - \delta \leq \tau \leq \delta.
\label{eqn:tinterval_plus2}
\end{equation}

Since $\abs{g(z)} \geq \abs{\re(g(z))}$, we bound the real part of $g$:
\begin{equation*}
\re(g(z)) = \re(a \rho^n e^{int}) = \re(a) \rho^n \cos(nt) - \im(a) \rho^n 
\sin(nt).
\end{equation*}
Note that $t$ in~\eqref{eqn:tinterval_plus2} is of order $\bigO(\rho^{n-1})$, 
so 
that
\begin{equation}
\re(g(z)) = \re(a) \rho^n - \im(a) n \rho^n t + \bigO(\rho^{2n}). 
\label{eqn:reg_approx}
\end{equation}
Let $\re(a) \neq 0$.  Inserting $t$ from~\eqref{eqn:tinterval_plus2} 
in~\eqref{eqn:reg_approx} gives
\begin{equation*}
\re(g(z)) = \re(a) \rho^n + \bigO(\rho^{2n-1}),
\end{equation*}
so that
\begin{equation*}
\abs{g(z)} = \abs{\re(a)} \rho^n + \bigO(\rho^{2n-1}) > c \abs{\re(a)} \rho^n
\end{equation*}
for all sufficiently small $\rho$.
Inserting $t$ from~\eqref{eqn:tinterval_plus2} in~\eqref{eqn:reg_approx} when 
$\re(a) = 0$ gives
\begin{equation*}
\re(g(z)) = - \im(a) \frac{n}{2} (\abs{a} + \tau) \rho^{2n-1} + 
\bigO(\rho^{2n}),
\end{equation*}
and
\begin{equation*}
\begin{split}
\abs{g(z)}
&\geq \abs{\im(a)} \frac{n}{2} ( \abs{a} + \tau ) \rho^{2n-1} + \bigO(\rho^{2n}) 
\\
&\geq \abs{\im(a)} \frac{n}{2} (\abs{a} - \delta) \rho^{2n-1} + \bigO(\rho^{2n})
> c \frac{n}{2} \abs{a}^2 \rho^{2n-1}
\end{split}
\end{equation*}
for all sufficiently small $\rho$.  (Recall that $\delta$ is of order 
$\rho^{n-1}$ in this case.)

As a next step, let $t \in [\pi/2, \pi]$.  We apply the same reasoning and 
determine all $t$ with~\eqref{eqn:tinterval_plus}.
Let $s = \pi - t \in [0, \pi/2]$.  Then $\sin(s) = \sin(t)$, and 
$s$ satisfies~\eqref{eqn:tinterval_plus2}.
Further, $\abs{\re(a \rho^n e^{int})} = \abs{\re(a\rho^n e^{-isn})}$.  Note that 
switching the sign of $t$ in the previous case does not alter the proof, so we 
obtain again~\eqref{eqn:ming_reanneq0} and~\eqref{eqn:ming_rean0}.

For $t \in [\pi, 2 \pi]$ the sine is negative, so~\eqref{eqn:tinterval_plus} 
has to be replaced by
\begin{equation*}
\frac{2}{\rho^{n-1}} \sin(t) = - \abs{a} - \tau,
\quad - \delta \leq \tau \leq \delta,
\end{equation*}
with $\delta$ as before.
For $t \in [\pi, 3\pi/2]$ the substitution $s = t - \pi \in [0, \pi/2]$ brings 
us back to the first case $t \in [0, \pi/2]$, so that we 
obtain~\eqref{eqn:ming_reanneq0} and~\eqref{eqn:ming_rean0}.
For $t$ in $[3\pi/2, 2\pi]$ we substitute $s = 2 \pi - t \in [0, \pi/2]$ so 
that $\sin(t) = - \sin(s)$ and $s$ satisfies~\eqref{eqn:tinterval_plus}.  As in 
the second case $\re(a \rho^n e^{int}) = \re(a \rho^n e^{-isn})$, and we 
obtain~\eqref{eqn:ming_reanneq0} and~\eqref{eqn:ming_rean0}.
\end{proof}

The following lemma shows that the index of $f$ at the origin depends on the 
first two nonvanishing coefficients in the Taylor series of $h$.

\begin{lem} \label{lem:cutoff_reneq0}
Let
\begin{equation*}
f(z) = h(z) - \conj{z} = z + a_n z^n + \sum_{k=n+1}^\infty a_k z^k - \conj{z}
\end{equation*}
with $\re(a_n) \neq 0$.  Then $f$ and
\begin{equation*}
g(z) = z + a_n z^n - \conj{z}
\end{equation*}
have the same index at the origin: $\ind(f; 0) = \ind(g; 0)$.
\end{lem}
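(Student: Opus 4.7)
The plan is to apply Rouché's theorem (Theorem~\ref{thm:Rouche}) on a small circle $\Gamma_\rho = \{|z| = \rho\}$ to compare $f$ with $g$, exploiting the fact that the tail $f - g = \sum_{k \geq n+1} a_k z^k$ is of strictly higher order than the lower bound on $|g|$ furnished by Lemma~\ref{lem:ming_reanneq0}.

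First I would invoke Lemma~\ref{lem:ming_reanneq0} with $a = a_n$. Since $\re(a_n) \neq 0$, the lemma yields a constant $c \in (0,1)$ (say $c = 1/2$) and $\rho_0 > 0$ such that
\begin{equation*}
    |g(z)| > c \, |\re(a_n)| \, \rho^n \quad \text{on } |z| = \rho
\end{equation*}
for every $0 < \rho < \rho_0$. In particular $g$ does not vanish on $\Gamma_\rho$. Next I would bound the tail: since $h$ is analytic near $0$, for some $\rho_1 > 0$ the series $\sum_{k \geq n+1} a_k z^k$ is dominated on $|z| \leq \rho_1$ by $C \rho^{n+1}$ for a constant $C > 0$. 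Hence $|f(z) - g(z)| \leq C \rho^{n+1}$ on $\Gamma_\rho$ for $\rho < \rho_1$.

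Comparing the two estimates, for all sufficiently small $\rho$ we have $C \rho^{n+1} < c |\re(a_n)| \rho^n$, and therefore $|f(z) - g(z)| < |g(z)| \leq |f(z)| + |g(z)|$ on $\Gamma_\rho$. In particular $f$ also does not vanish on $\Gamma_\rho$, so both windings are defined. Applying Theorem~\ref{thm:Rouche} with $f$ and $-g$ in the roles of its ``$f$'' and ``$g$'' gives $V(f; \Gamma_\rho) = V(-g; \Gamma_\rho)$, and by Proposition~\ref{prop:winding} (items~3 and~4) $V(-g; \Gamma_\rho) = V(g; \Gamma_\rho)$. By Definition~\ref{defn:Poincare_index} this yields $\ind(f;0) = \ind(g;0)$.

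The only subtle point is verifying the non-vanishing of $g$ on small circles, which would normally require a delicate analysis because $g$ does have the origin as a singular zero where both $z$ and $-\bar{z}$ contribute on equal footing. This is precisely what Lemma~\ref{lem:ming_reanneq0} delivers for free: once that lemma is in hand, the proof is a routine order-of-magnitude comparison, since the tail contributes $O(\rho^{n+1})$ while the hypothesis $\re(a_n) \neq 0$ ensures $|g|$ is bounded below by a nonzero multiple of $\rho^n$.
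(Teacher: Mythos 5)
Your proposal is correct and follows essentially the same route as the paper: a Rouché comparison of $f$ and $g$ on small circles, using Lemma~\ref{lem:ming_reanneq0} for the lower bound $|g(z)| > c\,|\re(a_n)|\rho^n$ and the analyticity of the tail for the upper bound $O(\rho^{n+1})$. Your extra care with the sign convention in the symmetric Rouché theorem (comparing $f$ with $-g$ and then using $V(-g;\Gamma_\rho)=V(g;\Gamma_\rho)$) is a harmless refinement of what the paper does implicitly.
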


\begin{proof}
We aim to apply Rouch\'e's theorem on a sufficiently small circle around the 
origin.
Let $R > 0$ be smaller than the radius of convergence of the power series of $h$ 
at $0$, and so that no further zeros of $f$ and $g$ are contained in the 
closed $R$-disk.  In particular $M = \sum_{k=n+1}^\infty a_k R^{k-(n+1)} < 
\infty$.  Let $0 < \rho < R$.  We then have for $\abs{z} = \rho$
\begin{equation*}
\abs{f(z) - g(z)} = \abs{ \sum_{k=n+1}^\infty a_k z^k}
\leq \rho^{n+1} \sum_{k=n+1}^\infty \abs{a_k} \rho^{k-(n+1)}
= \rho^{n+1} M.
\end{equation*}
Together with the bound from Lemma~\ref{lem:ming_reanneq0} we obtain the strict 
inequality
\begin{equation*}
\abs{f(z) - g(z)} \leq \rho^{n+1} M < c \abs{\re(a_n)} \rho^n < \abs{g(z)} 
\leq \abs{g(z)} + \abs{f(z)}
\end{equation*}
for all sufficiently small $\rho$, so that $f$ and $g$ have the same winding 
along $\abs{z} = \rho$ by Rouch\'e's theorem, showing the assertion.
\end{proof}

The next lemma shows that scaling the coefficient $a_n$ does not change the 
index.

\begin{lem} \label{lem:rays}
Let $n \geq 2$ and let $a, b \in \C$ be nonzero with same argument.
Then $g(z) = a z^n + z - \conj{z}$ and
$\widetilde{g}(z) = b z^n + z - \conj{z}$ have the same index at the origin: 
$\ind(g; 0) = \ind(\widetilde{g}; 0)$.
\end{lem}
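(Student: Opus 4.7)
The approach is a direct change-of-variables argument exploiting the fact that ``same argument'' means $b/a$ is a positive real. Writing $b = \lambda a$ with $\lambda > 0$, I would seek a real dilation $z = cw$ (with $c > 0$) that converts $g$ into a positive scalar multiple of $\widetilde{g}$. Computing directly, the linear terms $z - \conj{z}$ scale by $c$ while $a z^n$ scales by $c^n$, so
\[
g(cw) = c\bigl(a c^{n-1} w^n + w - \conj{w}\bigr).
\]
Choosing $c = \lambda^{1/(n-1)} > 0$ makes $a c^{n-1} = b$, yielding the exact identity $g(cw) = c\,\widetilde{g}(w)$.

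From this identity the conclusion is formal. Since $z \mapsto z/c$ is a homeomorphism carrying zeros of $g$ to zeros of $\widetilde{g}$, one function has an isolated zero at the origin if and only if the other does, and the indices at $0$ are both well-defined. Pick $r > 0$ small enough that $0$ is the only exceptional point of $g$ in $\{|z|\le r\}$ (and correspondingly of $\widetilde{g}$ in $\{|w| \le r/c\}$). Parametrizing $|z| = r$ by $z = re^{it}$ and substituting $z = cw$ shows that $w$ traces the circle $|w| = r/c$; on this circle $g$ equals $c\,\widetilde{g}$. By Proposition~\ref{prop:winding} (product rule plus vanishing winding of the nonzero constant $c$), $V(c\,\widetilde{g}; \Gamma) = V(\widetilde{g}; \Gamma)$, and hence $\ind(g;0) = \ind(\widetilde{g}; 0)$.

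There is essentially no obstacle: the whole lemma reduces to spotting the scaling symmetry $g(cw) = c\,\widetilde{g}(w)$. In contrast to the preceding Lemma~\ref{lem:cutoff_reneq0}, no estimates, Rouché-type inequalities, or homotopies are needed --- the identity is exact. The one subtlety is simply that $c = \lambda^{1/(n-1)}$ must be a genuine positive real, which is exactly what the hypothesis $\arg a = \arg b$ provides; without it, the factor $ac^{n-1}$ could not be steered onto $b$ by a real dilation and one would need a different reduction.
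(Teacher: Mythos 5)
Your proof is correct, and it takes a genuinely different route from the paper. The paper proves this lemma by a straight-line homotopy: on a small circle $\gamma(t)=\rho e^{it}$ it sets $H(t,s)=s\,g(\gamma(t))+(1-s)\,\widetilde{g}(\gamma(t))$, writes both coefficients as $\abs{a}e^{i\varphi}$, $\abs{b}e^{i\varphi}$, and argues that $H$ never vanishes for sufficiently small $\rho$ (the $2i\rho\sin t$ term is purely imaginary and nonzero away from $t\in\pi\Z$, the $\rho^n$ term can be imaginary only at finitely many $t$, and there one shrinks $\rho$). Your observation that $b=\lambda a$ with $\lambda>0$ yields the exact identity $g(cw)=c\,\widetilde{g}(w)$ for $c=\lambda^{1/(n-1)}>0$ (using that $\overline{cw}=c\conj{w}$ since $c$ is real) replaces all of this by a dilation symmetry: the winding of $g$ on $\abs{z}=r$ is literally the winding of $c\,\widetilde{g}$ on $\abs{w}=r/c$, and the positive constant $c$ does not affect the winding, so $\ind(g;0)=\ind(\widetilde{g};0)$. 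Your version is more elementary (no smallness estimates or homotopy in $\C\setminus\{0\}$) and in fact gives slightly more, namely that the entire zero sets of $g$ and $\widetilde{g}$ correspond under the dilation, so the indices agree at all corresponding zeros, not just at the origin. What the paper's homotopy buys instead is uniformity of method: the same construction is reused in Lemma~\ref{lem:halfplanes}, where the coefficient moves off its ray and no exact scaling symmetry is available, so your trick would not extend there. Your handling of well-definedness (isolatedness of the zero transfers under the homeomorphism $w\mapsto cw$, and the radius $r$ is chosen so that $0$ is the only exceptional point inside) is at the same level of care as the paper's.
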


\begin{proof}
Let $\gamma(t) = \rho e^{i t}$ with $ t \in [0, 2 \pi]$.
We show that the curves $g \circ \gamma$ and $\widetilde{g} \circ \gamma$ are 
homotopic in $\C \backslash \{ 0 \}$ for all sufficiently small $\rho$.
Define
\begin{equation*}
H(t,s) = s g(\gamma(t)) + (1-s) \widetilde{g}(\gamma(t)), \quad (t,s) \in [0, 
2\pi] \times [0, 1].
\end{equation*}
Note that $H(0, s) = H(2\pi, s)$ for all $s$, i.e., $H(\cdot, s)$ is a closed 
curve for each $s$.
Writing $a = \abs{a} e^{i \varphi}$ and $b = \abs{b} e^{i \varphi}$ (they 
have the same argument by assumption) gives
\begin{equation*}
H(t,s) = (s \abs{a} + (1-s) \abs{b}) \rho^n e^{i (n t + \varphi)} + 2 i \rho 
\sin(t),
\end{equation*}
and we have to show that $H(t,s)$ is nonzero for all $(t,s)$ and for all 
sufficiently small $\rho$.  If $t$ is a multiple of $\pi$, we have
\begin{equation*}
\abs{H(k \pi, s)} = (s \abs{a} + (1-s) \abs{b}) \rho^n > 0, \quad k = 0, 1, 2.
\end{equation*}
For $t \neq 0, \pi, 2\pi$ the term $2 i \rho \sin(t)$ is nonzero and purely 
imaginary.  For $H(t,s)$ to become zero, the first term
$(s \abs{a} + (1-s) \abs{b}) \rho^n e^{i (n t + \varphi)}$ must thus also be 
imaginary, which can happen at at most finitely many values of $t$ (independent 
of $s$).  At each such point, making $\rho$ sufficiently small guarantees that 
$H(t,s) \neq 0$.  Thus $H(t,s) \neq 0$ for all $(t,s)$ and all sufficiently 
small $\rho$, which shows that the curves $g \circ \gamma$ and 
$\widetilde{g} \circ \gamma$ are homotopic in $\C \backslash \{ 0 \}$ and thus 
have the same winding.  Since this holds for all sufficiently small $\rho$, 
we obtain $\ind(g; 0) = \ind(\widetilde{g}; 0)$.
\end{proof}

The previous lemma shows that the index of $g(z) = a z^n + z - \conj{z}$ at the 
origin is the same for all $a$ on a ray starting in the origin.  Next we show 
that the index is also the same if we displace $a$ in its (open) half-plane.

\begin{lem} \label{lem:halfplanes}
Let $n \geq 2$ and $\re(a) \neq 0$.  Then the functions $g(z) = a z^n + z - 
\conj{z}$ and $\widetilde{g}(z) = \sign(\re(a)) z^n + z - \conj{z}$ have the 
same index at the origin: $\ind(g; 0) = \ind(\widetilde{g}; 0)$.
\end{lem}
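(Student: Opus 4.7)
The plan is to mirror the homotopy strategy used in Lemma~\ref{lem:rays}, but along a straight-line path staying inside the open half-plane $\{w \in \C : \sign(\re w) = \sign(\re a)\}$ rather than along a ray through the origin. Set $b \coloneq \sign(\re(a)) \in \{+1, -1\}$ and, for $s \in [0, 1]$, let $a_s \coloneq s a + (1-s) b$. Convexity of this open half-plane together with the hypothesis $\re(a) \neq 0$ implies that the whole segment $\{a_s\}$ lies in it; in particular $\re(a_s)$ keeps a constant sign, and a convex-combination estimate gives
\begin{equation*}
\abs{\re(a_s)} \geq \mu \coloneq \min\{\abs{\re(a)}, 1\} > 0 \quad \text{for all } s \in [0, 1].
\end{equation*}
For $\rho > 0$ and $\gamma(t) = \rho e^{it}$, $t \in [0, 2\pi]$, define
\begin{equation*}
H(t, s) \coloneq g_s(\gamma(t)) = a_s \rho^n e^{i n t} + 2 i \rho \sin(t), \quad \text{where} \quad g_s(z) \coloneq a_s z^n + z - \conj{z}.
\end{equation*}
Then $H(\cdot, 0) = \widetilde{g} \circ \gamma$, $H(\cdot, 1) = g \circ \gamma$, and $H(0, s) = H(2\pi, s)$, so $H$ is a closed-curve homotopy between the two curves whose winding numbers I wish to compare.

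To show that this homotopy lives in $\C \setminus \{0\}$, I would appeal to Lemma~\ref{lem:ming_reanneq0} applied to each $g_s$. Since $\re(a_s) \neq 0$, the lemma (with any fixed $0 < c < 1$) yields
\begin{equation*}
\abs{H(t, s)} = \abs{g_s(\rho e^{it})} > c \abs{\re(a_s)} \rho^n \geq c \mu \rho^n > 0
\end{equation*}
whenever $\rho$ is sufficiently small. Given this estimate, homotopy invariance of the winding number gives $V(g; \Gamma) = V(\widetilde{g}; \Gamma)$ for the circle $\Gamma$ of radius $\rho$ around the origin, and therefore $\ind(g; 0) = \ind(\widetilde{g}; 0)$.

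The main obstacle is that Lemma~\ref{lem:ming_reanneq0} is stated for a single coefficient $a$, whereas the above bound has to hold for every $s \in [0, 1]$ with a common threshold on $\rho$. A close inspection of the proof of Lemma~\ref{lem:ming_reanneq0} shows that the required smallness of $\rho$ depends only on $\abs{\re(a)}$ (through $\mu$) and on constants hidden in the $\bigO$-terms, which in turn are controlled by polynomial expressions in $\abs{a}$ and $\abs{\im(a)}$. On the compact segment $\{a_s : s \in [0, 1]\}$ the quantities $\abs{a_s}$ and $\abs{\im(a_s)}$ are uniformly bounded above while $\abs{\re(a_s)} \geq \mu$ is uniformly bounded below, so every $\bigO$-estimate can be carried out with $s$-independent constants and a single $\rho > 0$ exists for which $H$ avoids the origin on the whole cylinder $[0, 2\pi] \times [0, 1]$. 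A note on why a direct Rouch\'e-type argument (bypassing the homotopy) seems to fail: the distance $\abs{a-b}$ may be arbitrarily large compared with $\abs{\re(a)} + 1$ when $\abs{\im(a)}$ is large, so the natural Rouch\'e inequality $\abs{g-\widetilde g} < \abs{g} + \abs{\widetilde g}$ cannot be ensured on $\abs{z} = \rho$ uniformly from Lemma~\ref{lem:ming_reanneq0}; the homotopy through the half-plane is precisely what resolves this.
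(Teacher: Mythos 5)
Your proof is correct and follows essentially the same route as the paper: a linear homotopy $H(t,s)$ whose coefficient $a_s = sa+(1-s)\sign(\re(a))$ stays in the open half-plane, with non-vanishing of $H$ on small circles supplied by Lemma~\ref{lem:ming_reanneq0} and the bound $\abs{\re(a_s)} \geq \min\{\abs{\re(a)},1\}$, followed by homotopy invariance of the winding. Your additional remark on choosing the smallness threshold for $\rho$ uniformly in $s$ is a careful touch that the paper leaves implicit, but it does not change the argument.
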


\begin{proof}
Let $a \in \C$ with $\re(a) > 0$ and write $g_a(z) = a z^n + z - \conj{z}$ and 
$g_1(z) = z^n + z - \conj{z}$.
Let $\gamma(t) = \rho e^{i t}$ with $\rho > 0$.  We show that the closed curves 
$g_a \circ \gamma$ and $g_1 \circ \gamma$ are homotopic in $\C \backslash \{ 0 
\}$, provided that $\rho$ is sufficiently small.  Define
\begin{equation*}
    H(t,s) = s g_a(\gamma(t)) + (1-s) g_1(\gamma(t)), \quad (t,s) \in 
[0, 2\pi]\times [0,1],
\end{equation*}
which satisfies $H(0, s) = H(2 \pi, s)$ for all $s$, i.e., each $H(\cdot, s)$ 
is a closed curve.  Since
\begin{equation*}
H(t,s) = (1 + s (a-1)) (\gamma(t))^n + \gamma(t) - \conj{\gamma(t)},
\end{equation*}
Lemma~\ref{lem:ming_reanneq0} shows that for all $t$ and all sufficiently small 
$\rho$
\begin{equation*}
\abs{H(t,s)} \geq c \abs{\re (1 + s (a-1)) } \rho^n 
\geq c \min \{ 1, \re(a) \} \rho^n > 0,
\end{equation*}
so that $\abs{H(t,s)} > 0$ for all $(t,s)$ and sufficiently small $\rho$.
This shows that $g_a \circ \gamma$ and $g_1 \circ \gamma$ are homotopic in $\C 
\backslash \{ 0 \}$, so that $g_a$ and $g_1$ have the same winding along 
$\abs{z} = \rho$.  Since this holds for all sufficiently small $\rho$, their 
indices are the same.
Note that we needed that $a$ and $1$ are on the same side of the imaginary 
axis, so that $H(t,s)$ is guaranteed to be nonzero.
Similarly $g_a$ with $\re(a) < 0$ and $g_{-1}$ are homotopic in $\C 
\backslash \{ 0 \}$.
\end{proof}

The next lemma shows that to compute the index of $a z^n + z - \conj{z}$ at the
origin, we can reduce the power $n$ in steps of $2$, provided that $\re(a) \neq
0$.

\begin{lem} \label{lem:nreduction}
Let $n \geq 2$ and $\re(a) \neq 0$.  Then $g(z) = az^{n+2} + z - \conj{z}$ and
$\widetilde{g}(z) = a z^n + z - \conj{z}$ have the same index at the origin.
%
\end{lem}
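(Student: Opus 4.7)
My plan is to construct a continuous homotopy between $\widetilde{g}$ and $g$ through functions that do not vanish on a small circle around the origin. Specifically, I define
\[
G_s(z) \coloneq a z^n \bigl((1-s) + s z^2\bigr) + z - \conj{z}, \quad s \in [0, 1],
\]
so that $G_0 = \widetilde{g}$ and $G_1 = g$. If I can show that for every sufficiently small $\rho > 0$ one has $G_s(\rho e^{it}) \neq 0$ for all $s \in [0, 1]$ and $t \in [0, 2\pi]$, then along $\gamma(t) = \rho e^{it}$ the closed curves $G_s \circ \gamma$ constitute a homotopy in $\C \setminus \{0\}$, so $V(\widetilde{g}; \Gamma) = V(g; \Gamma)$, where $\Gamma = \gamma([0,2\pi])$; hence $\ind(\widetilde{g}; 0) = \ind(g; 0)$.

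To prove the nonvanishing, I would argue by contradiction: suppose $G_s(\rho e^{it}) = 0$, i.e.\
\[
a \rho^n e^{int} \bigl((1-s) + s\rho^2 e^{2it}\bigr) = -2i\rho \sin t.
\]
Comparing moduli and using $\abs{(1-s) + s\rho^2 e^{2it}} \leq 1 - s + s\rho^2$, I obtain the magnitude bound $\abs{\sin t} \leq \tfrac{\abs{a}}{2}(1 - s + s\rho^2)\rho^{n-1}$, which forces $t$ to lie in a neighborhood of $\{0, \pi, 2\pi\}$ whose size scales like $(1-s+s\rho^2)\rho^{n-1}$. By the symmetries $t \mapsto \pi + t$ (which amounts to replacing $a$ by $(-1)^{n+1} a$, still with nonzero real part) and $t \mapsto -t$ (replacing $a$ by $\conj{a}$), it suffices to rule out a zero for $t$ near $0$.

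For such $t$, since the right-hand side $-2i\rho\sin t$ is purely imaginary, the real part of the left-hand side must vanish. A Taylor expansion in $t$ yields
\[
\re\bigl(a \rho^n e^{int} ((1-s) + s\rho^2 e^{2it})\bigr) = \rho^n\!\left[\re(a)(1 - s + s\rho^2) - \im(a)\, t\, \bigl(n(1-s+s\rho^2) + 2s\rho^2\bigr) + \bigO(t^2)\right]\!.
\]
The leading term $\re(a)(1-s+s\rho^2)\rho^n$ is nonzero since $\re(a) \neq 0$ and $1-s+s\rho^2 \geq \rho^2 > 0$. Using the bound on $\abs{t}$ together with $n(1-s+s\rho^2) + 2s\rho^2 \leq (n+2)(1-s+s\rho^2)$, the correction terms are a factor $\bigO(\rho^{n-1})$ smaller than the main term (which tends to $0$ as $\rho \to 0$ since $n \geq 2$), giving a contradiction for sufficiently small $\rho$.

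The main obstacle is uniformity in $s \in [0, 1]$, especially near $s = 1$, where $(1-s+s\rho^2)$ collapses to order $\rho^2$. The key observation enabling a uniform treatment is that the same factor $(1-s+s\rho^2)$ appears in both the leading real part and in the bound on $\abs{t}$ from the magnitude equation, so it cancels in the relative error and leaves a uniform $\bigO(\rho^{n-1})$ bound valid for all $s$.
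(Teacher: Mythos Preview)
Your argument is correct. Note that your homotopy $G_s = (1-s)\widetilde{g} + s g$ is exactly the linear homotopy underlying Rouch\'e's theorem, so at the structural level your proof and the paper's are the same device. The execution differs, however. The paper first invokes Lemma~\ref{lem:halfplanes} to reduce to $a = \pm 1$, and then verifies the Rouch\'e inequality $\abs{\widetilde{g}-g} < \abs{\widetilde{g}}$ by an explicit trigonometric computation, splitting into the regimes $t$ near $\{0,\pi\}$ and $t$ away from them. You instead keep $a$ general and rule out zeros of $G_s$ by showing that the real part of $a z^n((1-s)+sz^2)$ cannot vanish once the magnitude equation forces $t$ into an $O\bigl((1-s+s\rho^2)\rho^{n-1}\bigr)$-neighbourhood of $\{0,\pi\}$; this is much closer in spirit to the paper's Lemma~\ref{lem:ming_reanneq0} than to its proof of the present lemma. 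The payoff of your route is self-containment (no prior reduction of $a$) and a cleaner conceptual picture; the paper's route trades that for a concrete inequality once $a=\pm 1$. Your observation that the factor $1-s+s\rho^2$ appears both in the leading real part and in the bound on $\abs{t}$, and therefore cancels in the relative error, is exactly what makes the estimate uniform in $s$ --- including the delicate endpoint $s=1$ --- and it is worth stating explicitly that the $\bigO(t^2)$ constants depend only on $n$ and $a$, which you implicitly use.
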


\begin{proof}
We show that $g(z) = a z^{n+2} + z - \conj{z}$ and $\widetilde{g}(z) = a z^n + 
z - \conj{z}$ have the same winding on all sufficiently small circles around 
the origin using Rouch\'e's Theorem~\ref{thm:Rouche}.

We can assume that $a = \pm 1$ by Lemma~\ref{lem:halfplanes}.
Write $z = \rho e^{it}$ with $\rho > 0$ and $t \in [0, 2\pi]$.
To apply Rouch\'e's theorem, we wish to show the inequality
\begin{equation*}
\abs{\widetilde{g}(z)-g(z)}
= \abs{a \rho^n e^{int} - a \rho^{n+2} e^{i(n+2)t}}
< \abs{\widetilde{g}(z)}
= \abs{ a \rho^n e^{int} + 2i \rho \sin(t)}
\end{equation*}
for all $0 \leq t \leq 2 \pi$, or equivalently
\begin{equation}
\abs{\rho^{n-1} - \rho^{n+1} e^{i2t}}^2
< \abs{ a \rho^{n-1} e^{int} + 2i \sin(t)}^2, \quad 0 \leq t \leq 2 \pi,
\label{eqn:rouche_cond}
\end{equation}
for all sufficiently small $\rho > 0$.
The left and right hand sides in~\eqref{eqn:rouche_cond} are
\begin{equation*}
\begin{split}
\abs{\rho^{n-1} - \rho^{n+1} e^{i2t}}^2
&= \rho^{2n-2} - 2 \rho^{2n} \cos(2t) + \rho^{2n+2}, \\
\abs{a \rho^{n-1} e^{int} + 2i \sin(t)}^2
&= \rho^{2n-2} + 4 a \rho^{n-1} \sin(t) \sin(nt) + 4 \sin(t)^2,
\end{split}
\end{equation*}
respectively.  Thus~\eqref{eqn:rouche_cond} is equivalent to
\begin{equation}
F(t) \coloneq 4 \sin(t)^2 + 4 a \rho^{n-1} \sin(t) \sin(nt) + 2 \rho^{2n} 
\cos(2t) - \rho^{2n+2} > 0
\label{eqn:rouche_cond_explicit}
\end{equation}
for all $t \in [0, 2 \pi]$ and all sufficiently small $\rho > 0$.

Fix $0 < \delta < \frac{\pi}{4}$, so that $\cos(2\delta) > 0$ and 
$\sin(\delta) > 0$.  For $\abs{t} \leq \delta$ and $\abs{t-\pi} \leq \delta$ we 
compute
\begin{equation*}
F(t) = 4 \sin(t)^2 \left( 1 + a \rho^{n-1} \frac{\sin(nt)}{\sin(t)} \right)
+ \rho^{2n} ( 2 \cos(2t) - \rho^2 ).
\end{equation*}
Note that $\abs{a \sin(nt)/\sin(t)} \leq M < \infty$ on $[-\delta, \delta]$ 
and $[\pi - \delta, \pi + \delta]$, since $t = 0$ and $t = \pi$ are removable 
singularities.  Therefore,
\begin{equation*}
F(t) \geq 4 \sin(t)^2 ( 1 - M \rho^{n-1} ) + \rho^{2n} ( 2 \cos(2\delta) - 
\rho^2),
\end{equation*}
which is positive for all sufficiently small $\rho > 0$.
For $\delta \leq t \leq \pi - \delta$ and $\pi + \delta \leq t \leq 2 \pi - 
\delta$ we have $\sin(t)^2 \geq \sin(\delta)^2 > 0$, so that
$F(t) \geq 4 \sin(\delta)^2 + \bigO(\rho)$, which is positive for all 
sufficiently small $\rho > 0$.

This establishes~\eqref{eqn:rouche_cond_explicit} for all $t \in [0, 2\pi]$ 
and all sufficiently small $\rho > 0$, so that $\ind(g; 0) = 
\ind(\widetilde{g}; 0)$ by Rouch\'e's theorem.
\end{proof}

The next lemma completely characterizes the index of the harmonic polynomials
$a z^n + z - \conj{z}$ at the origin, for all $n \geq 2$ and all nonzero $a$.

\begin{lem} \label{lem:compute_index}
Let $g(z) = a z^n + z - \conj{z}$ with nonzero $a \in \C$ and $n \geq 2$.
We then have for even $n$
\begin{equation*}
\ind(g; 0) = \begin{cases} 0 & \re(a) \neq 0 \\ 1 & \re(a) = 0 \end{cases},
\end{equation*}
and for odd $n$
\begin{equation*}
\ind(g; 0) = \begin{cases} +1 & \re(a) \geq 0 \\ -1 & \re(a) < 0  \end{cases}.
\end{equation*}
\end{lem}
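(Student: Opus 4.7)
The plan is to use the reductions already established to cut the problem down to a small handful of explicit cases, and to handle each by locating the other zeros of $g$ in polar coordinates and combining with the global winding count of Proposition~\ref{prop:global_winding}.

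When $\re(a) \neq 0$, I would apply Lemma~\ref{lem:halfplanes} to replace $a$ by $\sign(\re(a)) \in \{\pm 1\}$ without changing $\ind(g; 0)$, and then iterate Lemma~\ref{lem:nreduction} to reduce $n$ in steps of two to $n = 2$ (if $n$ is even) or $n = 3$ (if $n$ is odd). This leaves four explicit polynomials $g(z) = \pm z^2 + z - \conj{z}$ and $g(z) = \pm z^3 + z - \conj{z}$ to check. When $\re(a) = 0$, Lemma~\ref{lem:rays} reduces $a$ to $\pm i$, but Lemma~\ref{lem:nreduction} is not available since it requires $\re(a) \neq 0$, so the purely imaginary case has to be treated for every $n \geq 2$ directly.

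For any nontrivial zero $z = r e^{i\theta} \neq 0$ of $g(z) = a z^n + z - \conj{z}$, the equation $g(z) = 0$ reads $a z^n = -2 i r \sin(\theta)$. Taking moduli gives the radial equation $|a| r^{n-1} = 2 |\sin(\theta)|$, while matching phases yields an angular equation of the form $\arg(a) + n \theta \equiv \mp \pi/2 \pmod{2\pi}$ with sign depending on $\sign(\sin(\theta))$. Using $a z^{n-1} = a z^n / z$ at the zero, a short calculation gives
\begin{equation*}
|h'(z)|^2 = |n a z^{n-1} + 1|^2 = 1 + 4 n (n-1) \sin^2(\theta),
\end{equation*}
which is strictly larger than $1$ whenever $\sin(\theta) \neq 0$. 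Hence every nontrivial zero of $g$ is sense-preserving and thus has index $+1$ by Theorem~\ref{thm:index_bound}.

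By Proposition~\ref{prop:global_winding} the winding of $g$ on a large circle equals $n$, and Theorem~\ref{thm:argument_principle} then gives $\ind(g; 0) = n - N$, where $N$ is the number of nontrivial zeros of $g$. A direct count of the solutions of the angular equation on $(0, \pi) \cup (\pi, 2\pi)$ yields $N = 2$ for $a = \pm 1$, $n = 2$; $N = 2$ for $a = 1$, $n = 3$; $N = 4$ for $a = -1$, $n = 3$; and $N = n - 1$ in the purely imaginary cases $a = \pm i$, producing exactly the indices claimed. The main obstacle is this last angular count, since no reduction of $n$ is available when $\re(a) = 0$: I would have to verify that the equation $n \theta \equiv \mp \pi/2 - \arg(a) \pmod{2\pi}$ has $\lfloor n/2 \rfloor$ solutions on one of the half-circles $(0, \pi)$, $(\pi, 2\pi)$ and $\lfloor (n-1)/2 \rfloor$ on the other, summing to $n - 1$ regardless of the parity of $n$.
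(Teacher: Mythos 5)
Your proposal is correct and follows essentially the same route as the paper's proof: the same reductions via Lemmas~\ref{lem:rays}, \ref{lem:halfplanes} and~\ref{lem:nreduction}, then an explicit determination of the nontrivial zeros, the observation that each is sense-preserving and hence has index $+1$, and finally the argument principle against the global winding $n$ from Proposition~\ref{prop:global_winding} to get $\ind(g;0)=n-N$. The only differences are cosmetic: your identity $\abs{h'(z)}^2 = 1+4n(n-1)\sin^2\theta$ at a nontrivial zero neatly unifies the paper's case-by-case estimates of $\abs{h'}$, and the angular count you flag as still to be verified in the case $a=\pm i$ is precisely the count the paper carries out (angles $\theta=k\pi/n$ with $k$ odd on $(0,\pi)$ and $k$ even on $(\pi,2\pi)$), yielding $N=n-1$ exactly as you claim.
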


\begin{proof}
We treat the cases $\re(a) \neq 0$ and $\re(a) = 0$ separately.

\paragraph*{\bfseries Case $\bm{\re(a) \neq 0$}.}
We distinguish the cases of even and odd $n$.  First, let 
$n$ be even, so that we can assume $n = 2$ by Lemma~\ref{lem:nreduction}.
By Lemma~\ref{lem:halfplanes} we can even assume that $a \neq 0$ is real (and 
even $\pm 1$).

We compute the zeros of $g(z) = a z^2 + z - \conj{z}$.  Writing $z = x + i y$ 
with $x, y \in \R$ we find
\begin{equation*}
g(z) = a (x+iy)^2 + 2 i y = a (x^2 - y^2) + 2i (a x y + y).
\end{equation*}
Thus $g(z) = 0$ if and only if
\begin{equation*}
x^2 - y^2 = 0 \quad \text{and} \quad y (a x + 1) = 0.
\end{equation*}
The second equation is zero if $y = 0$ (thus $x = 0$) or $a x + 1 = 0$, i.e., 
$x = - 1/a$ and thus $y = \pm 1/a$.
This gives the three solutions $z_0 = 0$, $z_+ = (-1 + i)/a$ and $z_- = 
(-1-i)/a$, and we compute their indices.  Let $h(z) = a z^2 + z$.
Then $\abs{h'(z_{\pm})} = \abs{-1 \pm 2i} > 1$ shows that 
$z_\pm$ are sense-preserving zeros, so that $\ind(g; z_\pm) = 1$ by 
Proposition~\ref{prop:regular}.

On a sufficiently large circle $\Gamma$, the winding of $g$ is $2$ by 
Proposition~\ref{prop:global_winding}.  Applying the argument 
principle~\ref{thm:argument_principle} on $\Gamma$ shows
\begin{equation*}
2 = V(g; \Gamma) = \ind(g; z_+) + \ind(g; z_-) + \ind(g; 0) = 2 + \ind(g; 0),
\end{equation*}
i.e., $\ind(g; 0) = 0$.
This concludes the case of $\re(a) \neq 0$ and even $n$.


Next we consider the case $\re(a) \neq 0$ and $n$ odd.
By Lemma~\ref{lem:nreduction} we can assume that $n = 3$.
Note that the winding of $g(z) = a z^3 + z - \conj{z}$ around a sufficiently 
large circle is $3$.  Let $h(z) = a z^3 + z$.  
As before, let $z = x + i y$ with real $x$ and $y$, and compute the zeros of 
$g$ explicitly.

For real $a > 0$ we find the three zeros
\begin{equation*}
z_0 = 0, \quad z_{\pm} = \pm i \sqrt{2/a}.
\end{equation*}
Since $\abs{h'(z_{\pm})} = 5 > 1$, the zeros $z_+$ and $z_-$ are 
sense-preserving and have index $+1$.  The argument principle applied on a 
sufficiently large circle now shows that $\ind(g; 0) = +1$.
Lemma~\ref{lem:halfplanes} shows that $\ind(g; 0) = +1$ for all $a$ with 
$\re(a) > 0$.

For real $a < 0$ we find the five zeros
\begin{equation*}
z_0 = 0, \quad
z_{\pm} = \pm \sqrt{\sqrt{3}/(4 \abs{a})} + i \sqrt{1/(4\abs{a})}, \quad
\conj{z_{\pm}}.
\end{equation*}
A short computation gives $\abs{h'(z_{\pm})} = \abs{h'(\conj{z_{\pm}})} > 1$, 
so that these zeros have index $+1$ by Proposition~\ref{prop:regular}.  
Finally, the argument principle applied to a sufficiently large circle implies 
$\ind(g;0 ) = -1$.
Lemma~\ref{lem:halfplanes} shows that $\ind(g; 0) = -1$ for all $a$ with 
$\re(a) < 0$.  This concludes the case $\re(a) \neq 0$ and odd $n$.


\paragraph*{\bfseries Case $\bm{\re(a) = 0}$.} By Lemma~\ref{lem:rays} we can assume 
that $a = \pm i$ and we treat the case $a = i$ first.
We explicitly compute the zeros of $g(z) = i z^n + z - \conj{z}$.
Let $z = \rho e^{i \varphi}$ with $\rho > 0$ and $\varphi \in [0, 2\pi[$.
Then $g(z) = 0$ is equivalent to
\begin{equation*}
\rho^{n-1} e^{i n \varphi} + 2 \sin(\varphi) = 0,
\end{equation*}
and considering the real and imaginary parts separately we obtain the
pair of equations
\begin{align}
&\rho^{n-1} \cos(n \varphi) + 2 \sin(\varphi) = 0, \label{eqn:zer_ai_1} \\
&\rho^{n-1} \sin(n \varphi) = 0. \label{eqn:zer_ai_2}
\end{align}
Equation~\eqref{eqn:zer_ai_2} is equivalent to $n \varphi = k \pi$ 
with $k \in \Z$, giving the angles
\begin{equation*}
\varphi_k = k \frac{\pi}{n}, \quad k = 0, 1, 2, \ldots, 2n-1.
\end{equation*}
Inserting these in~\eqref{eqn:zer_ai_1} gives
\begin{equation}
\rho^{n-1} = (-1)^{k+1} 2 \sin \left(k \frac{\pi}{n} \right) \label{eqn:rhok}
\end{equation}
which must be positive.  In particular, $k = 0, n, 2n$ are not admissible.  For 
$k = 1, 2, \ldots, n-1$ the sine is positive, and thus $k$ must be odd, and for 
$k = n+1, n+2, \ldots, 2n-1$, the sine is negative and thus $k$ must be even.
Let us count precisely the number of admissible angles.  First, let $n$ be 
even.  Then $k = 1, 3, \ldots, n-1$, give $n/2$ solutions, and $k = n+2, n+4, 
\ldots, 2n-2$ give another $n/2 - 1$ solutions.
Second, let $n$ be odd.  Then $k = 1, 3, \ldots, n-2$ give $(n-1)/2$ solutions, 
and $k = n+1, n+3, \ldots, 2n-2$ are $(n-1)/2$ solutions.  

In either case we thus have $n-1$ zeros $z_k = \rho_k e^{i \varphi_k}$, 
where $\rho_k$ is given by~\eqref{eqn:rhok}, and we show that these are 
sense-preserving.  Let $h(z) = i z^n + z$.  We then have
\begin{equation*}
\begin{split}
\abs{h'(z_k)} &\geq \abs{i n z_k^{n-1} } - 1
= \abslr{ i n (-1)^{k+1} 2 \sin \left(k \frac{\pi}{n} \right) e^{i (n-1) 
\varphi_k} } - 1 \\
&= 2 n \abslr{ \sin \left( k \frac{\pi}{n} \right) } - 1
\geq 2 n \sin (\pi/n) - 1
\geq 4 - 1 > 1.
\end{split}
\end{equation*}
In the last estimate we used that $\pi \geq \sin(\pi x)/x \geq 2$ for
$x \in [0, 1/2]$, readily established by basic calculus.
Therefore these $n-1$ zeros are sense-preserving and have index $+1$.  Since 
the winding of $g$ on a sufficiently large circle is $n$, this implies
$\ind(g; 0) = +1$.

It remains to consider the case $a = -i$.
The function $g(z) = -i z^n + z - \conj{z}$ has a zero at the origin.  As in 
the case $a = i$ we explicitly compute all other zeros and show that they are 
regular.
Writing $z = \rho e^{i \varphi}$, we find that $g(z) = 0$ is equivalent to
\begin{equation*}
\rho^{n-1} e^{i n \varphi} - 2 \sin(\varphi) = 0,
\end{equation*}
which gives again the angles $\varphi_k = k \pi/n$, $k = 0, 1, \ldots, 2n-1$, 
and the corresponding radii
\begin{equation*}
\rho_k^{n-1} = (-1)^k 2 \sin \left( k \frac{\pi}{n} \right).
\end{equation*}
(Note the flipped sign compared to the case $a = i$.)
Now, for $k = 1, 2, \ldots, n-1$ we must have $k$ even, and for $k = n+1, 
\ldots, 2n-1$ we must have $k$ odd.
In each case we find again $n-1$ solutions.  The same computation as before 
shows $\abs{h'(z_k)} > 3$, so that the $n-1$ zeros are sense-preserving with 
index $+1$, implying $\ind(g; 0) = 1$ as before.
\end{proof}

With Lemma~\ref{lem:compute_index} we have completed the proof of 
Theorem~\ref{thm:index_sing_zero}.

\subsection{The examples revisited}
\label{sect:examples2}

In the previous Section~\ref{sect:examples} we had shown various
examples of functions $f$ having a singular zero $z_0$, and we
computed their indices.  The cumbersome computation entailed the
determination of the winding on a sufficiently large circle that
encloses \emph{all} exceptional points of $f$, and determining the
index of all such points except for $z_0$.  We then used the argument
principle to finally determine the index of $z_0$.

Using Theorem~\ref{thm:index_sing_zero} we are now able to compute these
indices directly.  For Example~\ref{ex:index+1}, where $f(z) = z /
(1 - z^2) - \conj{z}$ has a singular zero in $0$, we find
\begin{equation}
\label{eq:series_1}
\frac{z}{1 - z^2} = z + z^3 + \bigO(z^5),
\end{equation}
so that the first non-vanishing power after $z$ in the series
expansion is $n=3$, with corresponding coefficient $a_3 = 1$.  Hence,
by our classification, the index is $+1$.  From the series
expansion~\eqref{eq:series_1} one also finds that the index of the
isolated singular zero $0$ of $f(z) = -z / (1 - z^2)$ is $-1$
(cf.~Example~\ref{ex:index-1}).

In Example~\ref{ex:index0} we considered the function $f(z) = 2z^3 +
1/(8z) - \conj{z}$, and we found that $z_0 = i/2$ is one out of four
singular zeros having index $0$.  We develop the analytic part of $f$
in a power series around $z_0$, i.e.,
\begin{equation*}
    2z^3 + \frac{1}{8z} = \frac{i}{2}
        - \left(z - \frac{i}{2} \right)
        + 4i \left(z - \frac{i}{2} \right)^2
        + \bigO\left( \left(z - \frac{i}{2} \right)^4 \right).
\end{equation*}
In contrast to the previous example, the coefficients in this
expansion are not normalized as in Theorem~\ref{thm:index_sing_zero}, so
we resort to the general form of our characterization, given in
Theorem~\ref{thm:index_sing_zero_general}.  We have $a_1 = -1 =
e^{i\theta}$ with $\theta=\pi$, and $0 \neq a_2 = 4i = 4e^{i\varphi}$,
with $\varphi = \pi/2$, so that $\eta = \cos(\pi/2 - 3/2 \pi)=-1$ (see
definition~\eqref{eq:eta}).
Since $n=2$ is even and $\eta \neq 0$ we obtain $\ind(f; i/2) = 0$.

In the final Example~\ref{ex:exp} we considered the function $f(z) =
\exp(z) - 1 - \conj{z}$.  Lacking of tools to compute the index of the
singular zero $z_0 = 0$, we resorted to the phase portrait of $f$ (see
Figure~\ref{fig:sing_examples}), from which we read that the index
should be zero.  Developing $\exp(z) - 1$ in a series around the
origin, i.e.,
\begin{equation*}
    \exp(z) - 1 = z + \frac{z^2}{2} + \bigO(z^3),
\end{equation*}
we see that $n=2$ is the first non-vanishing power, and that the
corresponding coefficient is $a_2 = 1/2$.  From the classification in
Theorem~\ref{thm:index_sing_zero} it follows that $\ind(f; 0) = 0$.

\section{Conclusions and future work}
\label{sect:imag_diff}
\label{sect:conclusion}

In this work we developed a technique to determine the index of
\emph{singular} zeros of $f(z) = h(z) - \conj{z}$.  In summary, this
index depends only on the first two non-vanishing coefficents of the
power series of $h$ at the zero.  Our classification is \emph{almost}
always applicable.  As discussed in the
Remarks~\ref{rem:comment1}--\ref{rem:comment3},  we had to exclude one
particular coefficient configuration from our classification.  The
reason is that in this case the index $\ind(f; z_0)$ is \emph{not}
entirely defined by these first two coefficients.

\begin{figure}[t]
\begin{center}
    \includegraphics[width=0.49\textwidth]{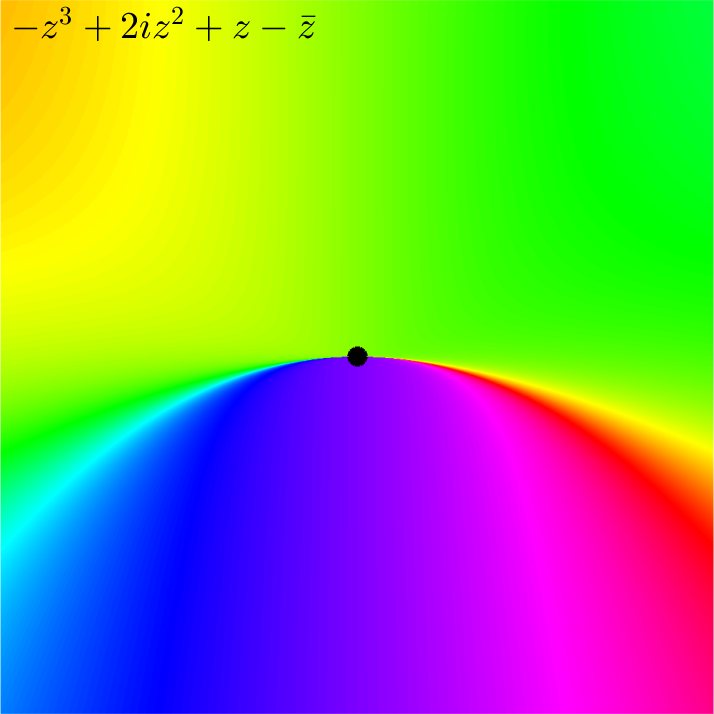}
    \hfill
    \includegraphics[width=0.49\textwidth]{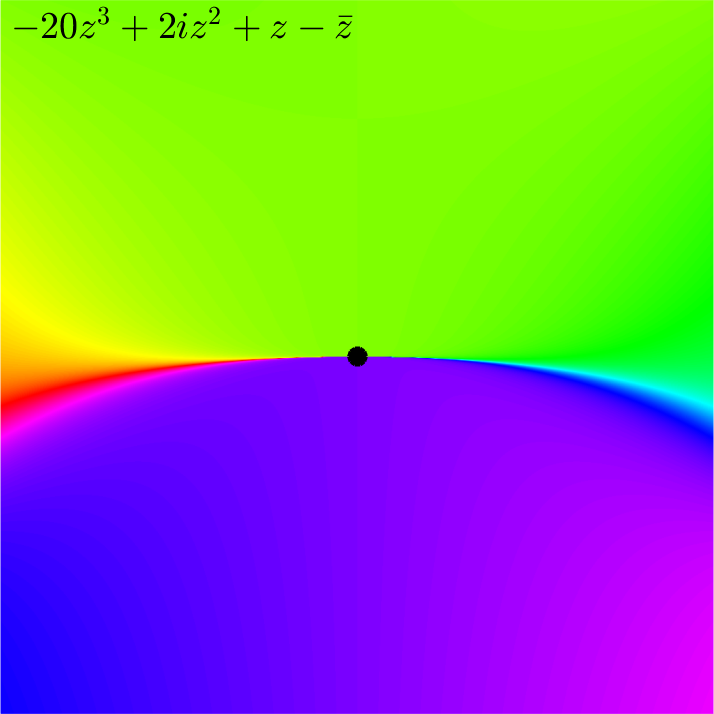}
    \caption{An example for the excluded case in
    Theorem~\ref{thm:index_sing_zero}; see
    Section~\ref{sect:conclusion}.\label{fig:difficult}}
\end{center}
\end{figure}

In order to illustrate this behaviour, consider the functions
\begin{equation*} f_1(z) = -z^3 + iz^2 + z - \conj{z} \quad \text{and}
\quad f_2(z) = -20z^3 + iz^2 + z - \conj{z}, \end{equation*} which are
shown in Figure~\ref{fig:difficult}.  Both functions have a singular
zero at the origin, and their Taylor series up to order two is
identical.  Since the coefficient $a_2 = 2i$ is purely imaginary, our
classification in Theorem~\ref{thm:index_sing_zero} does not apply.
Indeed their indices at $0$ are $+1$ and $-1$, implying that the first
two coefficients are \emph{not} sufficient to determine the index.
Our preliminary investigation of this case, i.e., where the second
non-vanishing coefficient is purely imaginary, has led us to the
belief that this situation is much more irregular, and that a
thorough investigation is to be carried out in future work.

Another interesting extension of our results would be the
consideration of a general anti-analytic part, i.e., general harmonic
mappings $f = h + \conj{g}$.  Since the index is a local property in
this case as well, one could hope that a similarly flavoured
characterization can be obtained in this general setting.

\opt{preprint}{
\bibliographystyle{siamplain}
}
\opt{tandf}{
\bibliographystyle{gCOV}
}
\bibliography{singular}

\end{document}